
\documentclass[11pt,sfbold,
]{paper}
\usepackage{amsfonts}
\usepackage{amsmath}
\usepackage{theorem}
\usepackage{authordate1-4}
\usepackage{fancyhdr}
\usepackage{geometry}
\usepackage{setspace}

\setcounter{MaxMatrixCols}{10}

\newtheorem{theorem}{Theorem}
\newtheorem{example}[theorem]{Example}

\newtheorem{corollary}[theorem]{Corollary}

\newtheorem{definition}[theorem]{Definition}

\newtheorem{lemma}[theorem]{Lemma}

\newtheorem{remark}[theorem]{Remark}

\geometry{left=1.0in,right=1.00in,top=1.0in,bottom=1.0in}
\pagestyle{fancy}
\fancyhf{}
\rhead{}
\fancyhead[LE,RO]{\thepage}
\fancyhead[CE]{Edgar Delgado-Eckert}
\fancyhead[CO]{Canonical Representatives and Orthogonality}
\let\tsum\sum

\begin{document}

\author{Edgar Delgado-Eckert\thanks{%
Centre for Mathematical Sciences, Munich University of Technology,
Boltzmannstr.3, 85747 Garching, Germany.} \thanks{%
Pathology Department, Tufts University, 150 Harrison Av., Boston, MA 02111,
USA (correspondence address).} \thanks{%
The author acknowledges support by a Public Health Service grant (RO1
AI062989) to David Thorley-Lawson at Tufts University, Boston, MA.}}
\title{Canonical representatives for residue classes of a polynomial ideal
and orthogonality}
\maketitle

\begin{abstract}
The aim of this paper is to unveil an unexpected relationship between the
normal form of a polynomial with respect to a polynomial ideal and the more
geometric concept of orthogonality. We present a new way to calculate the
normal form of a polynomial with respect to a polynomial ideal I in the ring
of multivariate polynomials over a field K, provided the field K is finite
and the ideal I is a vanishing ideal. In order to use the concept of
orthogonality, we introduce a symmetric bilinear form on a vector space over
a finite field.
\end{abstract}


\begin{keywords}
   Polynomial algebras, polynomial ideals, Gr\"obner bases, inner products, normal form 
 \end{keywords}

\begin{PACS}
   13P10; 15A63
 \end{PACS}


\section{\protect\bigskip Introduction}

A well known result of B. Buchberger is the existence of the normal form of
a polynomial with respect to a polynomial ideal $I$ in the ring of
multivariate polynomials over a field $K$. This result follows from the
existence of so called Gr\"{o}bner bases for polynomial ideals. For a given
fixed term ordering, this normal form is unique \cite{806353}, \cite%
{MR0463136}, \cite{MR0268178}. In this paper we present a new way to
calculate this normal form, provided the field $K$ is finite and the ideal $%
I $ is a vanishing ideal, i.e. $I$ is equal to the set of polynomials which
vanish in a given set of points $X$. Our method doesn't pursue establishing
a new, especially efficient, algorithm for the computation of such a normal
form. Rather, the aim of this paper is to unveil an interesting way to look
at this issue based on the concept of orthogonality.

For orthogonality to apply, we introduce a symmetric bilinear form on a
vector space (see, for instance, \cite{SCHARLAU}). A symmetric bilinear form
can be seen as a generalized inner product. Some authors have explored
vector spaces endowed with generalized forms of inner products. For example,
we refer to the following papers: \cite{MR0133024}, \cite{MR0385527}%
,\linebreak \cite{MR0377485}, \cite{MR0482441}, \cite{MR578592}, \cite%
{MR586525}, \cite{MR2064794}.

Having defined a symmetric bilinear form, we are able to introduce the
notion of orthogonality and orthonormality. Then we consider the orthogonal
solution of a solvable inhomogeneous under-determined linear operator
equation. If one thinks of an inhomogeneous under-determined system of
linear equations in an Euclidean space, the orthogonal solution is simply
the solution that is perpendicular to the affine subspace associated with
the system. After going through existence and uniqueness considerations, we
come to the main statement of this paper, namely, that the above mentioned
normal form can be obtained as the orthogonal solution of a system of linear
equations. That system of equations arises as a linear formulation of the
multivariate polynomial interpolation problem.

Based on our literature research, we believe that the study of polynomial
algebras in the framework of symmetric bilinear spaces (vector spaces
endowed with a symmetric bilinear form) represents a novel approach.
Suitable extensions of our method to more general fields (i.e. infinite
fields) could open new possibilities for studying problems in the areas of
polynomial algebra, computational algebra and algebraic geometry using
functional analytic or linear algebraic techniques.

The concept of orthogonal solution is not limited by monomial orders, as it
is the case for Gr\"{o}bner bases calculations. In this sense, our method
reveals a wider class of normal forms (with respect to vanishing ideals) in
which the normal forms \`{a} la Buchberger appear as special cases.

Another application that we will describe in detail elsewhere is the problem
of choosing a particular interpolant among all possible solutions of a
highly under-determined multivariate interpolation problem. This is related
to the study of the performance of so called "reverse engineering"
algorithms such as the one presented in \cite{MR2086931}.

The organization of this article is the following:

Section 2 is devoted to the general definition of \emph{symmetric bilinear
spaces} and \emph{orthogonal solutions} of an inhomogeneous linear operator
equation. Subsection 2.1 covers basic definitions and properties of
symmetric bilinear spaces, in particular, the concepts of \emph{orthogonality%
} and \emph{orthonormality} are introduced. Subsection 2.2 introduces the
notion of orthogonal solution of a solvable under-determined linear operator
equation. Existence and uniqueness of orthogonal solutions are proved and
some issues regarding the existence of orthonormal bases are discussed.

Section 3 deals with the vector space of functions $F:K^{n}\rightarrow K,$
where $K$ is a finite field and $n\in 
\mathbb{N}
.$ In subsection 3.1 we paraphrase the known result that all the functions
in that space are polynomial functions. Subsection 3.2 introduces a linear
operator called \emph{evaluation epimorphism} and formulates the
multivariate polynomial interpolation problem in a linear algebraic fashion.

Section 4 covers the more technical aspect of constructing special symmetric
bilinear forms. Using that type of symmetric bilinear form will allow us to
prove the main result of this article in section 5.

Section 5 is devoted to the statement and proof of our main result. Namely,
that the canonical normal form of an arbitrary polynomial $f$ with respect
to a vanishing ideal $I(X)$ in the ring of multivariate polynomials over a
finite field $K$ can be calculated as the orthogonal solution of a linear
operator equation involving the evaluation epimorphism.

For standard terminology, notation and well known results in computational
algebraic geometry and commutative algebra we refer to \cite{MR1417938} and 
\cite{MR1213453}.

\section{Symmetric bilinear vector spaces and orthogonal solutions of
inhomogeneous systems of linear equations}

\subsection{Basic definitions}

In this subsection we will introduce the concept of a symmetric bilinear
form in a vector space. With this concept it will be possible to define
symmetric bilinear vector spaces and orthonormality. Furthermore, some basic
properties are briefly reviewed (cf. \cite{SCHARLAU})

\begin{definition}
Let $V$ be a vector space over a field $K.$ A\emph{\ }symmetric and bilinear
mapping%
\begin{equation*}
\left\langle \cdot ,\cdot \right\rangle :V\times V\rightarrow K
\end{equation*}%
is called \emph{symmetric bilinear form }on $V.$
\end{definition}

\begin{definition}[Notational Definition]
Let be $n\,,m\in 
\mathbb{N}
$ natural numbers and $K$ a field. The set of all $m\times n$ matrices ($m$%
\emph{\ rows and }$n$\emph{\ columns}) with entries in $K$ is denoted by $%
M(m\times n;$ $K).$
\end{definition}

\begin{remark}
\label{MatrixDefOfBilinForms}Let $V$ be a finite dimensional vector space
over a field $K$. After fixing a basis $(u_{1},...,u_{d})$ of $V,$ it is a
well known result, that there is a one-to-one correspondence between the set
of all symmetric bilinear forms on $V$ and the set of all $d\times d$
symmetric matrices with entries in $K$ seen as representing matrices with
respect to the basis $(u_{1},...,u_{d}).$
\end{remark}

\begin{definition}
A vector space $V$ over a field $K$ endowed with a symmetric bilinear form%
\begin{equation*}
\left\langle \cdot ,\cdot \right\rangle :V\times V\rightarrow K
\end{equation*}%
is called a \emph{symmetric bilinear space}.
\end{definition}

\begin{example}
Every (real) Euclidean space is due to the positive definiteness of its
inner product a symmetric bilinear space.
\end{example}

Given a symmetric bilinear space $V$ over a field $K$, \emph{orthogonality}
and \emph{orthonormality} of two vectors $v,w\in V$ as well as the concept
of \emph{orthonormal basis} are defined exactly as in the Euclidean case.
Similarly, the \emph{orthogonal complement} $W^{\perp }:=\left\{ v\in V\mid
v\perp w\text{ }\forall \text{ }w\in W\right\} $ of a subspace $W\subseteq V$
\ is a subspace of $V.$ Furthermore, if $\left( w_{1},...w_{d}\right) $ is
an orthonormal basis of $V,$ then for every vector $v\in V$ holds%
\begin{equation*}
v=\tsum_{k=1}^{d}\left\langle v,w_{k}\right\rangle w_{k}
\end{equation*}%
where the field elements $\left\langle v,w_{i}\right\rangle \in K,$ $%
i=1,...,d$ are the well known \emph{Fourier coefficients}. Contrary to the
case of Euclidean or unitary vector spaces, in symmetric bilinear spaces
orthonormal bases don't always exist.

\begin{example}
\label{Const.Of.Orthonorm.Basis}Let $d\in 
\mathbb{N}
$ be a natural number and $V$ a $d$-dimensional vector space over a field $%
K. $ Furthermore let $\left( u_{1},...u_{d}\right) $ be a basis of $V.$ Then
one can construct a symmetric bilinear form on $V$ by setting%
\begin{equation*}
\left\langle u_{i},u_{j}\right\rangle :=\delta _{ij}\text{ }\forall \text{ }%
i,j\in \{1,...,d\}
\end{equation*}%
(see also Remark \ref{MatrixDefOfBilinForms}.) Here the basis $\left(
u_{1},...u_{d}\right) $ is obviously orthonormal.
\end{example}

\subsection{Orthogonal solutions of inhomogeneous linear operator equations}

\begin{definition}
Let $d\in 
\mathbb{N}
$ be a natural number and $V$ a $d$-dimensional symmetric bilinear space
over a field $K.$ Furthermore, let $W$ be an arbitrary vector space over the
field $K$, $T:V\rightarrow W$ a non-injective linear operator and $w\in W$ a
vector with the property%
\begin{equation*}
w\in T(V)
\end{equation*}%
Now let $m:=$nullity$(T)\in 
\mathbb{N}
$ be the dimension of the kernel of $T.$ A solution $v^{\ast }\in V$ of the
equation%
\begin{equation*}
Tv=w
\end{equation*}%
is called \emph{orthogonal solution}, if for an arbitrary basis $%
(u_{1},...,u_{m})$ of $\ker (T)$ the following orthogonality conditions hold%
\begin{equation*}
\left\langle u_{i},v^{\ast }\right\rangle =0\text{ }\forall \text{ }i\in
\{1,...,m\}
\end{equation*}
\end{definition}

\begin{remark}
\label{Orth.Sol.LiesOnOrth.Comp.}Let $(u_{1},...,u_{m})$ be a basis of $\ker
(T).$ Then each arbitrary vector $u\in \ker (T)$ can be written in the form%
\begin{equation*}
u=\tsum_{i=1}^{m}\lambda _{i}u_{i}
\end{equation*}%
with suitable field elements $\lambda _{i}\in K.$ If the orthogonality
conditions%
\begin{equation*}
\left\langle u_{i},v^{\ast }\right\rangle =0\text{ }\forall \text{ }i\in
\{1,...,m\}
\end{equation*}%
hold for the basis $(u_{1},...,u_{m}),$ then we have%
\begin{equation*}
\left\langle u,v^{\ast }\right\rangle =\left\langle \tsum_{i=1}^{m}\lambda
_{i}u_{i},v^{\ast }\right\rangle =\tsum_{i=1}^{m}\lambda _{i}\left\langle
u_{i},v^{\ast }\right\rangle =0
\end{equation*}%
and that means%
\begin{equation*}
v^{\ast }\in \ker (T)^{\perp }
\end{equation*}%
In particular, for any other different basis $(w_{1},...,w_{m})$ of $\ker
(T) $ it holds%
\begin{equation*}
\left\langle w_{j},v^{\ast }\right\rangle =0\text{ }\forall \text{ }j\in
\{1,...,m\}
\end{equation*}
\end{remark}

\begin{theorem}
Let $d\in 
\mathbb{N}
$ be a natural number and $V$ a $d$-dimensional symmetric bilinear space
over a field $K.$ Furthermore, let $W$ be an arbitrary vector space over the
field $K$, $T:V\rightarrow W$ a non-injective linear operator and $w\in W$ a
vector with the property%
\begin{equation*}
w\in T(V)
\end{equation*}%
If $\ker (T)$ has an \emph{orthonormal basis}, then the equation%
\begin{equation*}
Tv=w
\end{equation*}%
has always a unique orthogonal solution $v^{\ast }\in V.$
\end{theorem}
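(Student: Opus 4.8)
The plan is to prove existence and uniqueness separately, in both cases exploiting the hypothesis that $\ker(T)$ has an orthonormal basis $(u_{1},\dots ,u_{m})$ (with $m=\mathrm{nullity}(T)$) in order to perform a Fourier-type projection. For existence, I would start from the fact that $w\in T(V)$: pick any particular solution $v_{0}\in V$ of $Tv=v_{0}$, i.e. $Tv_{0}=w$, and then define
\begin{equation*}
v^{\ast }:=v_{0}-\tsum_{k=1}^{m}\left\langle v_{0},u_{k}\right\rangle u_{k}.
\end{equation*}
Since each $u_{k}\in \ker (T)$, the correction term lies in $\ker (T)$, so $Tv^{\ast }=Tv_{0}=w$; thus $v^{\ast }$ is a solution. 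It remains to check the orthogonality conditions: for each $i\in \{1,\dots ,m\}$, using bilinearity, symmetry of the form, and $\left\langle u_{i},u_{k}\right\rangle =\delta _{ik}$,
\begin{equation*}
\left\langle u_{i},v^{\ast }\right\rangle =\left\langle u_{i},v_{0}\right\rangle -\tsum_{k=1}^{m}\left\langle v_{0},u_{k}\right\rangle \left\langle u_{i},u_{k}\right\rangle =\left\langle u_{i},v_{0}\right\rangle -\left\langle v_{0},u_{i}\right\rangle =0,
\end{equation*}
so $v^{\ast }$ is an orthogonal solution.

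For uniqueness, suppose $v^{\ast }$ and $v^{\ast \ast }$ are both orthogonal solutions of $Tv=w$. Then $T(v^{\ast }-v^{\ast \ast })=w-w=0$, so $v^{\ast }-v^{\ast \ast }\in \ker (T)$ and hence $v^{\ast }-v^{\ast \ast }=\tsum_{j=1}^{m}\lambda _{j}u_{j}$ for suitable $\lambda _{j}\in K$. Applying the orthogonality conditions for both solutions with respect to the basis $(u_{1},\dots ,u_{m})$ gives $\left\langle u_{i},v^{\ast }-v^{\ast \ast }\right\rangle =0$ for every $i$; on the other hand, expanding with $\left\langle u_{i},u_{j}\right\rangle =\delta _{ij}$ yields $\left\langle u_{i},v^{\ast }-v^{\ast \ast }\right\rangle =\lambda _{i}$. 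Hence all $\lambda _{i}=0$ and $v^{\ast }=v^{\ast \ast }$. Equivalently, one can phrase this via Remark \ref{Orth.Sol.LiesOnOrth.Comp.}: both solutions lie in $\ker (T)^{\perp }$, so their difference lies in $\ker (T)\cap \ker (T)^{\perp }$, and the restriction of the bilinear form to $\ker (T)$ being represented (in the orthonormal basis) by the identity matrix is nondegenerate, forcing the difference to be $0$.

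I do not expect a serious obstacle here; the argument is essentially the standard orthogonal-projection construction. The only subtle point is identifying precisely where the orthonormal-basis hypothesis is used and cannot be dropped: without it, the Fourier-type correction term $\tsum_{k}\left\langle v_{0},u_{k}\right\rangle u_{k}$ need not be well defined in a way that makes $v^{\ast }$ orthogonal to all of $\ker (T)$, and the form restricted to $\ker (T)$ may be degenerate, so that $\ker (T)\cap \ker (T)^{\perp }\neq \{0\}$ and uniqueness fails. It is worth noting explicitly in the write-up that the computation $\left\langle u_{i},v^{\ast }\right\rangle =0$ genuinely relies on $\left\langle u_{i},u_{k}\right\rangle =\delta _{ik}$, i.e. on both orthogonality and normalization of the $u_{k}$, so that the result is sharp relative to the stated hypothesis.
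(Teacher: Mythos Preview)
Your proof is correct and follows essentially the same approach as the paper: pick a particular solution, subtract its Fourier projection onto $\ker(T)$ using the orthonormal basis to obtain existence, and for uniqueness write the difference of two orthogonal solutions in the orthonormal basis and read off that all coefficients vanish. Apart from a harmless typo (``$Tv=v_{0}$'' should read ``$Tv=w$''), your argument matches the paper's proof line by line, with the added remark on nondegeneracy of the form on $\ker(T)$ being a nice conceptual complement.
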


\begin{proof} Let $m:=$nullity$(T)=\dim (\ker (T))\in 
\mathbb{N}
$ be the dimension of the null space of $T$ and $(u_{1},...,u_{m})$ an
orthonormal basis of $\ker (T).$ Since $w\in T(V),$ there must exist a
solution $\widehat{\xi }\in V$ of $Tv=w.$ For any other solution $\xi \in V$
we have%
\begin{equation*}
T(\xi -\widehat{\xi })=T(\xi )-T(\widehat{\xi })=0
\end{equation*}%
and therefore%
\begin{equation*}
\xi -\widehat{\xi }\in \ker (T)
\end{equation*}%
That means that all solutions $\xi \in V$ of $Tv=w$ can be written in the
form%
\begin{equation*}
\xi =\widehat{\xi }+\tsum_{i=1}^{m}\lambda _{i}u_{i}
\end{equation*}%
with the $\lambda _{i}\in K,$ $i=1,...,m$ running over all $K.$ In
particular, we can construct a very specific solution by choosing the
parameters $\lambda _{i}\in K,$ $i=1,...,m$ in the following manner%
\begin{equation*}
\lambda _{i}:=-\left\langle u_{i},\widehat{\xi }\right\rangle ,\text{ }%
i=1,...,m
\end{equation*}%
For this solution%
\begin{equation*}
v^{\ast }:=\widehat{\xi }+\tsum_{i=1}^{m}-\left\langle u_{i},\widehat{\xi }%
\right\rangle u_{i}
\end{equation*}%
and for every $j\in \{1,...,m\}$ it holds%
\begin{eqnarray*}
\left\langle u_{j},v^{\ast }\right\rangle &=&\left\langle u_{j},\widehat{\xi 
}+\tsum_{i=1}^{m}-\left\langle u_{i},\widehat{\xi }\right\rangle
u_{i}\right\rangle =\left\langle u_{j},\widehat{\xi }\right\rangle
+\tsum_{i=1}^{m}-\left\langle u_{i},\widehat{\xi }\right\rangle \left\langle
u_{j},u_{i}\right\rangle \\
&=&\left\langle u_{j},\widehat{\xi }\right\rangle
+\tsum_{i=1}^{m}-\left\langle u_{i},\widehat{\xi }\right\rangle \delta
_{ji}=\left\langle u_{j},\widehat{\xi }\right\rangle -\left\langle u_{j},%
\widehat{\xi }\right\rangle =0
\end{eqnarray*}%
This shows the existence of an orthogonal solution of $Tv=w.$ Now let $%
\widetilde{v}\in V$ be another orthogonal solution of $Tv=w.$ Again, since%
\begin{equation*}
T(v^{\ast }-\widetilde{v})=T(v^{\ast })-T(\widetilde{v})=0
\end{equation*}%
we can write%
\begin{equation*}
v^{\ast }=\widetilde{v}+\tsum_{i=1}^{m}\alpha _{i}u_{i}
\end{equation*}%
with suitable $\alpha _{i}\in K.$ From the orthogonality conditions for $%
v^{\ast }$ and $\widetilde{v}$ we have $\forall $ $j\in \{1,...,m\}$%
\begin{eqnarray*}
0 &=&\left\langle u_{i},v^{\ast }\right\rangle =\left\langle u_{i},%
\widetilde{v}+\tsum_{i=1}^{m}\alpha _{i}u_{i}\right\rangle =\left\langle
u_{j},\widetilde{v}\right\rangle +\left\langle u_{j},\tsum_{i=1}^{m}\alpha
_{i}u_{i}\right\rangle \\
&=&\tsum_{i=1}^{m}\alpha _{i}\left\langle u_{j},u_{i}\right\rangle
=\tsum_{i=1}^{m}\alpha _{i}\delta _{ji}=\alpha _{j}
\end{eqnarray*}%
and that means $v^{\ast }=\widetilde{v}.$\end{proof}

\begin{remark}
The existence of an orthonormal basis of $\ker (T)$ is crucial for the proof
of this theorem. It is important to notice that in a symmetric bilinear
space over a general field $K,$ the Gram-Schmidt orthonormalization only
works if the norm%
\begin{equation*}
\left\Vert v\right\Vert :=\sqrt{\left\langle v,v\right\rangle }
\end{equation*}%
of the vectors used in the Gram-Schmidt process exists in the field $K\ $and
is not equal to the zero element. In general terms, the existence of square
roots would be assured in a field $K$ which satisfies%
\begin{equation}
\forall \text{ }x\in K\text{ }\exists \text{ }y\in K\text{ such that }y^{2}=x
\label{ExistenceOfSquareRoots}
\end{equation}%
Now, if $K$ is finite, then (\ref{ExistenceOfSquareRoots}) holds if and only
if $Char(K)=2.$\newline
After fixing a basis $(u_{1},...,u_{d})$ for the vector space $V,$ the
question whether $\left\langle v,v\right\rangle =0$ for $v\neq 0$ is
equivalent to the nontrivial solvability in $K^{d}$ of the following
quadratic form%
\begin{equation}
\vec{x}^{t}A\vec{x}=0  \label{QuadraticForm}
\end{equation}%
where $A$ is the representing matrix of $\left\langle .,.\right\rangle $\
with respect to the basis \emph{\ }$(u_{1},...,u_{d})$ (see Remark \ref%
{MatrixDefOfBilinForms}). In chapter 3, \S 2 of \cite{MR1429394} explicit
formulas for the exact number of solutions in $K^{n}$ of equations of the
type (\ref{QuadraticForm}), where $A$ is a $n\times n$ symmetric matrix with
entries in a finite field $K$, can be found.
\end{remark}

\begin{corollary}
\label{ZeroSol}Let $K,$ $d,$ $V,$ $W$ and $T$ be as in the theorem above. If 
$\ker (T)$ has an \emph{orthonormal basis}, then the equation%
\begin{equation*}
Tv=0
\end{equation*}%
has always the unique orthogonal solution $0\in V.$
\end{corollary}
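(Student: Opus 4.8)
The plan is to derive Corollary \ref{ZeroSol} as a direct special case of the theorem just proved, by taking $w=0$. First I would observe that $0\in W$ trivially satisfies $0\in T(V)$, since $T(0)=0$, so the hypotheses of the theorem are met (the other hypotheses on $K$, $d$, $V$, $W$, $T$, and the existence of an orthonormal basis of $\ker(T)$, are inherited verbatim). Hence the theorem guarantees that $Tv=0$ has a \emph{unique} orthogonal solution $v^{\ast}\in V$.

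Next I would check that $0\in V$ is indeed an orthogonal solution of $Tv=0$. Clearly $T(0)=0$, so $0$ is a solution. For the orthogonality condition, let $(u_{1},\dots,u_{m})$ be an orthonormal basis of $\ker(T)$; then $\langle u_{i},0\rangle=0$ for every $i\in\{1,\dots,m\}$ by bilinearity of the form (one argument being the zero vector). Thus $0$ satisfies the defining property of an orthogonal solution.

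Finally, combining the two observations: the theorem says the orthogonal solution is unique, and we have exhibited $0$ as one such solution, so it must be \emph{the} orthogonal solution, i.e. $v^{\ast}=0$. This completes the argument.

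Since this is purely a specialization of the already-established theorem together with the trivial remark that the zero vector is orthogonal to everything, there is no genuine obstacle here; the only thing to be careful about is confirming that $0\in T(V)$ so that the theorem actually applies, and that the form's bilinearity gives $\langle u_{i},0\rangle=0$ (rather than, say, needing nondegeneracy or an orthonormality property for this step).
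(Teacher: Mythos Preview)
Your argument is correct and is exactly the intended specialization: the paper states this as an immediate corollary of the preceding theorem (with $w=0$) and gives no separate proof, so your verification that $0\in T(V)$ and that $0$ satisfies the orthogonality conditions, followed by invoking uniqueness, is precisely what is needed.
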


\section{The vector space of functions $\mathbf{F}_{q}^{n}\rightarrow 
\mathbf{F}_{q}$}

In the next subsection we review the well known result that any function $%
F:K^{n}\rightarrow K,$ where $K$ is a finite field and $n\in 
\mathbb{N}
$, is a polynomial function. Furthermore, we introduce the family of
fundamental monomial functions.

\subsection{The ring of polynomial functions in $n$ variables over $\mathbf{F%
}_{q}$ and the vector space of functions $\mathbf{F}_{q}^{n}\rightarrow 
\mathbf{F}_{q}$}

\begin{definition}
We will denote a finite field with $\mathbf{F}_{q}$, where $q$ stands for
the number of elements of the field ($q$ is a power of the prime
characteristic of the field).
\end{definition}

\begin{definition}[Notational definition]
We call a commutative Ring $(R,+,\cdot )$ with multiplicative identity $%
1\neq 0$ and the binary operations $\cdot $ and $+$ just Ring $R$.
\end{definition}

The following three results are well known:

\begin{theorem}[and Definition]
Let $R$ be a ring and $n\in 
\mathbb{N}
$ a natural number. The set 
\begin{equation*}
PF_{n}(R):=\{g\text{ }|\text{ }g:R^{n}\rightarrow R\text{ is polynomial}\}
\end{equation*}%
together with the common operations $+$ and $\cdot $ of addition and
multiplication of mappings is a ring. This ring is called \emph{ring of all
polynomial functions over }$R$\emph{\ in }$n$\emph{\ }$R$\emph{-valued
variables.}
\end{theorem}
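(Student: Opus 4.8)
The plan is to realize $PF_{n}(R)$ as the image of the evaluation homomorphism from the polynomial ring, and then to invoke the standard fact that the image of a ring homomorphism is a subring of the target.

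First I would record that the set $\mathrm{Map}(R^{n},R)$ of \emph{all} functions $R^{n}\rightarrow R$, equipped with the pointwise operations $(f+g)(a):=f(a)+g(a)$ and $(f\cdot g)(a):=f(a)\cdot g(a)$, is itself a ring. Each ring axiom for $\mathrm{Map}(R^{n},R)$ is obtained by evaluating both sides at an arbitrary point $a\in R^{n}$ and applying the corresponding axiom in $R$; the zero element is the constant function $0$, the multiplicative identity is the constant function $1$, and $1\neq 0$ in $\mathrm{Map}(R^{n},R)$ because $R^{n}\neq\emptyset$ (here the hypothesis $1\neq 0$ in $R$, and hence $R\neq\emptyset$, is used).

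Next I would introduce the evaluation map $\varepsilon:R[x_{1},\dots,x_{n}]\rightarrow\mathrm{Map}(R^{n},R)$ sending a polynomial $p$ to the polynomial function $a\mapsto p(a)$; by the very definition of ``polynomial function'', $PF_{n}(R)$ is precisely the image of $\varepsilon$. The key step is to verify that $\varepsilon$ is a ring homomorphism, i.e. $\varepsilon(p+q)=\varepsilon(p)+\varepsilon(q)$, $\varepsilon(pq)=\varepsilon(p)\,\varepsilon(q)$ and $\varepsilon(1)=1$. The additive identity is immediate from the formation of coefficients of a sum of polynomials; the multiplicative identity amounts to checking that substituting a point $a\in R^{n}$ into the product polynomial, whose coefficients are the convolution sums of the coefficients of the factors, yields the product of the two substituted values, which follows directly from the distributive law and the commutativity of multiplication in $R$. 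Equivalently, this is exactly the universal property of $R[x_{1},\dots,x_{n}]$ applied to the $R$-algebra $\mathrm{Map}(R^{n},R)$ together with the choice of the $n$ coordinate projections as the images of $x_{1},\dots,x_{n}$.

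Finally, since $\varepsilon$ is a ring homomorphism, its image $PF_{n}(R)$ is closed under $+$ and under $\cdot$, contains $\varepsilon(0)=0$ and $\varepsilon(1)=1$, and is closed under additive inverses because $-\varepsilon(p)=\varepsilon(-p)$; hence $PF_{n}(R)$ is a subring of $\mathrm{Map}(R^{n},R)$ and in particular a ring in its own right, with $1\neq 0$ inherited from $\mathrm{Map}(R^{n},R)$. I do not expect a genuine obstacle here: the only point requiring a little care is the verification that $\varepsilon$ respects multiplication, i.e. that the pointwise product of two polynomial functions is the polynomial function attached to the product polynomial, together with the small bookkeeping that the constant function $1$ lies in $PF_{n}(R)$ and is distinct from the constant function $0$.
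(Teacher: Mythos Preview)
Your argument is correct: realizing $PF_{n}(R)$ as the image of the evaluation homomorphism $\varepsilon:R[x_{1},\dots,x_{n}]\rightarrow\mathrm{Map}(R^{n},R)$ is the cleanest way to see it is a ring, and you have correctly noted the one point that needs care (multiplicativity of $\varepsilon$, equivalently the universal property of the polynomial ring applied to the commutative $R$-algebra $\mathrm{Map}(R^{n},R)$).

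As for comparison with the paper: there is nothing to compare. The paper does not prove this statement at all; it simply lists it among ``well known'' results without argument. Your write-up therefore supplies strictly more than the paper does. If anything, you could shorten it further by invoking the universal property directly rather than also sketching the coefficient-convolution verification, since the two are redundant.
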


\begin{theorem}[and Definition]
Let $K$ be an arbitrary field and $n\in 
\mathbb{N}
$ a natural number. The set of all functions%
\begin{equation*}
f:K^{n}\rightarrow K
\end{equation*}%
together with the common operations of addition of mappings and scalar
multiplication is a vector space over $K$. We denote this vector space with $%
F_{n}(K).$
\end{theorem}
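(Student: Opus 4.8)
The plan is to verify directly that the eight vector space axioms hold for the set $F_n(K)$ of all maps $K^n \to K$, once the operations are defined pointwise. First I would fix notation: for $f,g : K^n \to K$ and $\lambda \in K$ set $(f+g)(x) := f(x) + g(x)$ and $(\lambda f)(x) := \lambda\, f(x)$ for every $x \in K^n$, where the operations on the right-hand sides are those of the field $K$. Since $K$ is closed under its own addition and multiplication, $f+g$ and $\lambda f$ are again well-defined maps $K^n \to K$, so both operations are internal to $F_n(K)$.

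Next I would check that $(F_n(K), +)$ is an abelian group. Associativity and commutativity of $+$ follow because, for each fixed $x \in K^n$, the identities $((f+g)+h)(x) = (f+(g+h))(x)$ and $(f+g)(x) = (g+f)(x)$ reduce to the corresponding identities in $K$ applied to the field elements $f(x), g(x), h(x)$. The neutral element is the zero function $0 : x \mapsto 0_K$, and the additive inverse of $f$ is the map $-f : x \mapsto -f(x)$; that these satisfy $f + 0 = f$ and $f + (-f) = 0$ is again a pointwise consequence of the field axioms.

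Then I would verify the four axioms relating scalar multiplication to addition: for all $\lambda,\mu \in K$ and $f,g \in F_n(K)$,
\[
\lambda(f+g) = \lambda f + \lambda g,\quad (\lambda+\mu)f = \lambda f + \mu f,\quad (\lambda\mu)f = \lambda(\mu f),\quad 1_K\cdot f = f .
\]
Each of these is an equality of maps, hence is established by evaluating both sides at an arbitrary $x \in K^n$ and invoking, respectively, distributivity, distributivity, associativity of multiplication, and the unit law in $K$. This exhausts the axioms, so $F_n(K)$ is a vector space over $K$, and the notation $F_n(K)$ is thereby justified.

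I do not expect any genuine obstacle: the statement is the familiar fact that the set of all functions from a set $S$ into a vector space $W$ is itself a vector space under pointwise operations (here $S = K^n$ and $W = K$, viewed as a one-dimensional space over itself), and the entire argument is a routine transfer of the field axioms of $K$ ``one point at a time.'' Alternatively, one could simply invoke this general principle and omit the explicit verification.
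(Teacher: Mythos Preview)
Your proposal is correct and entirely standard: the paper itself offers no proof of this statement, listing it among results that are ``well known'' and moving on. Your explicit pointwise verification of the vector-space axioms is exactly the routine argument one would supply if a proof were demanded, and your closing remark about invoking the general principle for function spaces is precisely the shortcut the paper implicitly takes.
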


\begin{theorem}
\label{ThmFuncIsPolFunc}Let $\mathbf{F}_{q}$ be a finite field. Then for the 
\emph{sets} $F_{n}(\mathbf{F}_{q})$ and $PF_{n}(\mathbf{F}_{q})$ it holds%
\begin{equation*}
F_{n}(\mathbf{F}_{q})=PF_{n}(\mathbf{F}_{q})
\end{equation*}
\end{theorem}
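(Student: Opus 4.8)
The plan is to show the two inclusions separately. One direction is immediate: every polynomial function $g:\mathbf{F}_q^n\to\mathbf{F}_q$ is by definition a function $\mathbf{F}_q^n\to\mathbf{F}_q$, so $PF_n(\mathbf{F}_q)\subseteq F_n(\mathbf{F}_q)$ trivially. The content of the theorem is the reverse inclusion $F_n(\mathbf{F}_q)\subseteq PF_n(\mathbf{F}_q)$: every set-theoretic function on $\mathbf{F}_q^n$ is represented by some polynomial in $\mathbf{F}_q[x_1,\dots,x_n]$. The natural approach is an explicit interpolation construction in the spirit of Lagrange interpolation.

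First I would treat the univariate case. For a single point $a\in\mathbf{F}_q$, I would exhibit the indicator function as a polynomial: the map $x\mapsto 1-(x-a)^{q-1}$ takes the value $1$ at $x=a$ and the value $0$ at every other element of $\mathbf{F}_q$, by Fermat's little theorem (i.e.\ $b^{q-1}=1$ for all $b\in\mathbf{F}_q^\times$). Call this polynomial $\delta_a(x)$. Then for an arbitrary function $f:\mathbf{F}_q\to\mathbf{F}_q$ the polynomial $\sum_{a\in\mathbf{F}_q} f(a)\,\delta_a(x)$ induces exactly $f$. Next I would pass to $n$ variables by forming products of these univariate indicators: for a point $\vec{a}=(a_1,\dots,a_n)\in\mathbf{F}_q^n$, set $\delta_{\vec a}(x_1,\dots,x_n):=\prod_{j=1}^n \delta_{a_j}(x_j)$, which evaluates to $1$ at $\vec a$ and to $0$ at every other point of $\mathbf{F}_q^n$. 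Given any $F\in F_n(\mathbf{F}_q)$, the polynomial
\begin{equation*}
P:=\sum_{\vec a\in\mathbf{F}_q^n} F(\vec a)\,\delta_{\vec a}(x_1,\dots,x_n)
\end{equation*}
then satisfies $P(\vec b)=F(\vec b)$ for every $\vec b\in\mathbf{F}_q^n$, so the polynomial function induced by $P$ equals $F$, giving $F\in PF_n(\mathbf{F}_q)$.

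There is no serious obstacle here; the only point requiring care is the verification that $\delta_a$ has the claimed values, which rests on $q$ being finite (so that $\mathbf{F}_q^\times$ is a finite group of order $q-1$) — this is precisely where the hypothesis that the field is finite enters, and it is what makes the sum defining $P$ a finite sum. An alternative, slightly slicker, organization would be a counting argument: the evaluation map sending a polynomial to its induced function has the reduced monomials $x_1^{e_1}\cdots x_n^{e_n}$ with $0\le e_j\le q-1$ spanning its image, and since there are $q^n$ such monomials while $\dim_{\mathbf{F}_q} F_n(\mathbf{F}_q)=q^n=|\mathbf{F}_q^n|$, one concludes surjectivity; but since the theorem is stated as a set equality and the interpolation construction is self-contained and explicit, I would present the direct argument above. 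I would remark that this also identifies a natural spanning set of $F_n(\mathbf{F}_q)$ by fundamental monomial functions, which the following subsection apparently develops.
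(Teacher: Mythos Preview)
Your argument is correct and complete: the Lagrange-type interpolation using $\delta_a(x)=1-(x-a)^{q-1}$ and its $n$-fold product is the standard constructive proof, and your verification via Fermat's little theorem is exactly right. The paper itself does not give a proof at all but simply cites Chapter~7, \S5 of \cite{MR1429394} (Lidl--Niederreiter, \emph{Finite Fields}), where precisely this interpolation argument appears. So your approach coincides with the standard one the paper is pointing to, and has the advantage of being self-contained. Your closing remark about the reduced monomials spanning $F_n(\mathbf{F}_q)$ anticipates Theorem~\ref{Fund.Mon.Fct.AreBasis} of the paper, as you suspected.
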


\begin{proof} This result is proved in Chapter 7, Section 5 of \cite{MR1429394}. 
\end{proof}

\begin{definition}
Let $n,q\in 
\mathbb{N}
$ be natural numbers. Further let $>$ be a total ordering on $\left( 
\mathbb{N}
_{0}\right) ^{n}.$ The according to $>$ decreasingly ordered set 
\begin{equation*}
M_{q}^{n}:=\left\{ \alpha \in \left( 
\mathbb{N}
_{0}\right) ^{n}\mid \alpha _{j}<q\text{ }\forall \text{ }j\in
\{1,...,n\}\right\}
\end{equation*}%
of all $n$-tuples with entries smaller than $q$ is denoted by $%
M_{q}^{n}\subset \left( 
\mathbb{N}
_{0}\right) ^{n}.$
\end{definition}

\begin{remark}
\label{KardinalitaetVonMp}In order to avoid a too complicated notation, we
skip the appearance of the order relation $>$ in the symbol for this set. It
is easy to prove, that $M_{q}^{n}$ contains exactly $q^{n}$ $n$-tuples. We
will index the $n$-tuples in $M_{q}^{n}$ starting with the biggest and
ending with the smallest:%
\begin{equation*}
\alpha _{1}>\alpha _{2}>...>\alpha _{q^{n}}
\end{equation*}
\end{remark}

\begin{definition}
For any fixed natural numbers $n,q\in 
\mathbb{N}
$ and for each multi index $\alpha \in M_{q}^{n}$ consider the monomial
function%
\begin{eqnarray*}
g_{nq\alpha } &:&K^{n}\rightarrow K \\
\vec{x} &\mapsto &g_{nq\alpha }(\overrightarrow{x}):=\overrightarrow{x}%
^{\alpha }
\end{eqnarray*}%
All these monomial functions $g_{nq\alpha },$ $\alpha \in M_{q}^{n}$ are
called \emph{fundamental monomial functions}.
\end{definition}

The following result is elementary. Its easy induction proof is left to the
reader:

\begin{theorem}
\label{Fund.Mon.Fct.AreBasis}A basis for the vector space $F_{n}(\mathbf{F}%
_{q})$ is given by the fundamental monomial functions%
\begin{equation*}
(g_{nq\alpha })_{\alpha \in M_{q}^{n}}
\end{equation*}
\end{theorem}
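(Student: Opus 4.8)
The plan is to show that the fundamental monomial functions $(g_{nq\alpha})_{\alpha \in M_q^n}$ both span $F_n(\mathbf{F}_q)$ and are linearly independent, and I would in fact get both at once from a dimension count. First I would observe that by Theorem~\ref{ThmFuncIsPolFunc} every $F \in F_n(\mathbf{F}_q)$ is a polynomial function, so $F = \overline{h}$ for some polynomial $h \in \mathbf{F}_q[x_1,\dots,x_n]$. Using the relations $x_j^q = x_j$ valid on $\mathbf{F}_q$ (a consequence of Fermat's little theorem / the fact that the multiplicative group has order $q-1$), one can repeatedly reduce any exponent $\geq q$ in each variable, so that $h$ induces the same function as a polynomial all of whose monomials $\overrightarrow{x}^{\alpha}$ satisfy $\alpha_j < q$ for all $j$, i.e. $\alpha \in M_q^n$. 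Hence $F$ is a $K$-linear combination of the $g_{nq\alpha}$, which proves the spanning property.

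Next I would handle linear independence. Here the clean route is a counting argument: by Remark~\ref{KardinalitaetVonMp}, $|M_q^n| = q^n$, so there are exactly $q^n$ fundamental monomial functions. On the other hand $F_n(\mathbf{F}_q)$ is the space of all functions $\mathbf{F}_q^n \to \mathbf{F}_q$; since $|\mathbf{F}_q^n| = q^n$, this space has dimension $q^n$ over $\mathbf{F}_q$ (a function is determined by its $q^n$ values, freely chosen). A spanning set of size equal to the dimension of a finite-dimensional vector space is automatically a basis, so we are done. If one prefers to avoid invoking $\dim F_n(\mathbf{F}_q) = q^n$ separately, the alternative is a direct induction on $n$ as the statement suggests: for $n=1$ a nonzero polynomial of degree $< q$ has at most $q-1 < q$ roots in $\mathbf{F}_q$, forcing all coefficients to vanish; for the inductive step one writes a vanishing combination $\sum_{\alpha} c_\alpha \overrightarrow{x}^{\alpha} = 0$ as a polynomial in $x_n$ of degree $< q$ with coefficients in $F_{n-1}(\mathbf{F}_q)$, fixes the first $n-1$ coordinates arbitrarily, applies the $n=1$ case to conclude each coefficient function vanishes identically, and then applies the inductive hypothesis.

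The main obstacle, and the only genuinely substantive point, is the exponent-reduction step: one must be careful that replacing $x_j^q$ by $x_j$ changes the polynomial but \emph{not} the induced function on $\mathbf{F}_q^n$, and that iterating this terminates with all exponents strictly below $q$. This is routine but is the heart of why the statement is true. Everything else — the cardinality $|M_q^n| = q^n$, the dimension of $F_n(\mathbf{F}_q)$, and the "spanning set of the right size is a basis" principle — is standard linear algebra. For this reason I expect the write-up to be short: cite Theorem~\ref{ThmFuncIsPolFunc} and Remark~\ref{KardinalitaetVonMp}, perform the reduction, and close with the counting argument, exactly as the paper signals by calling the proof an easy induction left to the reader.
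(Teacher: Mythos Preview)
Your proposal is correct, and since the paper gives no proof at all (it merely says ``Its easy induction proof is left to the reader''), your argument---spanning via Theorem~\ref{ThmFuncIsPolFunc} plus exponent reduction, followed by either a dimension count using Remark~\ref{KardinalitaetVonMp} or the direct induction on $n$---is exactly the kind of proof the paper is gesturing at. There is nothing to add.
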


\begin{remark}
The basis elements in the basis $(g_{nq\alpha })_{\alpha \in M_{q}^{n}}$ are
ordered according to the order relation $>$ used to order the $n$-tuples in
the set $M_{q}^{n}.$ That means (see Remark \ref{KardinalitaetVonMp})%
\begin{equation*}
(g_{nq\alpha })_{\alpha \in M_{q}^{n}}=(g_{nq\alpha _{i}})_{i\in
\{1,...,q^{n}\}}
\end{equation*}
\end{remark}

\subsection{Solving the polynomial interpolation problem in $PF_{n}(\mathbf{F%
}_{q})$}

In this subsection we define the \emph{evaluation epimorphism} of a tuple $(%
\vec{x}_{1},...,\vec{x}_{m})\in (\mathbf{F}_{q}^{n})^{m}$ of points in the
space $\mathbf{F}_{q}^{n}.$ The evaluation epimorphism allows for a linear
algebraic formulation of the multivariate polynomial interpolation problem.

\begin{theorem}[and Definition]
Let $\mathbf{F}_{q}$ be a finite field and $n,m\in 
\mathbb{N}
$ natural numbers with\linebreak $m\leq q^{n}$. Further let%
\begin{equation*}
\vec{X}:=(\vec{x}_{1},...,\vec{x}_{m})\in (\mathbf{F}_{q}^{n})^{m}
\end{equation*}%
be a tuple of $m$ \textbf{different} $n$-tuples with entries in the field $%
\mathbf{F}_{q}.$ Then the mapping%
\begin{eqnarray*}
\Phi _{\vec{X}} &:&F_{n}(\mathbf{F}_{q})\rightarrow \mathbf{F}_{q}^{m} \\
f &\mapsto &\Phi _{\vec{X}}(f):=(f(\vec{x}_{1}),...,f(\vec{x}_{m}))^{t}
\end{eqnarray*}%
is a surjective linear operator. $\Phi _{\vec{X}}$ is called the \emph{%
evaluation epimorphism} \emph{of the tuple }$\vec{X}.$
\end{theorem}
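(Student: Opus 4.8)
The plan is to split the claim into its two parts—linearity and surjectivity—and to reduce surjectivity to Theorem~\ref{ThmFuncIsPolFunc} together with the hypothesis that the points $\vec{x}_{1},\dots,\vec{x}_{m}$ are pairwise different. The only substantive ingredient is that hypothesis together with the identification $F_{n}(\mathbf{F}_{q})=PF_{n}(\mathbf{F}_{q})$; everything else is routine bookkeeping.

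First I would verify linearity. The vector-space operations on $F_{n}(\mathbf{F}_{q})$ are defined pointwise, so for $f,g\in F_{n}(\mathbf{F}_{q})$ and $\lambda\in\mathbf{F}_{q}$ one has $(f+\lambda g)(\vec{x}_{i})=f(\vec{x}_{i})+\lambda g(\vec{x}_{i})$ for every $i\in\{1,\dots,m\}$. Reading this off in all $m$ coordinates of $\mathbf{F}_{q}^{m}$ gives $\Phi_{\vec{X}}(f+\lambda g)=\Phi_{\vec{X}}(f)+\lambda\,\Phi_{\vec{X}}(f)$; hence $\Phi_{\vec{X}}$ is $\mathbf{F}_{q}$-linear into $\mathbf{F}_{q}^{m}$.

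Next I would prove surjectivity. Fix an arbitrary target $\vec{c}=(c_{1},\dots,c_{m})^{t}\in\mathbf{F}_{q}^{m}$. Since $\vec{x}_{1},\dots,\vec{x}_{m}$ are pairwise different, the rule $\vec{x}_{i}\mapsto c_{i}$ is a well-defined partial function on $\mathbf{F}_{q}^{n}$, which extends to a total function $F:\mathbf{F}_{q}^{n}\rightarrow\mathbf{F}_{q}$ by declaring $F(\vec{y}):=0$ for $\vec{y}\notin\{\vec{x}_{1},\dots,\vec{x}_{m}\}$. Then $F\in F_{n}(\mathbf{F}_{q})$ and $\Phi_{\vec{X}}(F)=\vec{c}$, so $\Phi_{\vec{X}}$ is onto; by Theorem~\ref{ThmFuncIsPolFunc} this $F$ is moreover a polynomial function, which is what makes $\Phi_{\vec{X}}$ the linear-algebraic form of the multivariate interpolation problem. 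Alternatively, and more structurally, one may enumerate all $q^{n}$ points of $\mathbf{F}_{q}^{n}$ and consider the evaluation map $\Phi$ at all of them: a function vanishing at every point of $\mathbf{F}_{q}^{n}$ is the zero function, so $\Phi$ is injective, and since $\dim_{\mathbf{F}_{q}}F_{n}(\mathbf{F}_{q})=q^{n}$ by Theorem~\ref{Fund.Mon.Fct.AreBasis}, $\Phi$ is an isomorphism onto $\mathbf{F}_{q}^{q^{n}}$; because the $\vec{x}_{i}$ are distinct, $\Phi_{\vec{X}}$ is $\Phi$ followed by the coordinate projection onto the $m$ slots indexed by $\vec{x}_{1},\dots,\vec{x}_{m}$, and a coordinate projection is surjective. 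The ``hard part'' is thus entirely outsourced to Theorem~\ref{ThmFuncIsPolFunc} (or, in the second route, to the observation that the zero function is the unique function vanishing everywhere on the finite set $\mathbf{F}_{q}^{n}$); once that is in hand, no obstacle remains.
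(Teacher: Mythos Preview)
Your proof is correct and follows essentially the same approach as the paper: linearity is routine (the paper leaves it to the reader), and surjectivity is obtained by simply constructing a function with the prescribed values at the distinct points, which is exactly what your extension-by-zero does. One small slip: in the linearity computation you wrote $\Phi_{\vec{X}}(f+\lambda g)=\Phi_{\vec{X}}(f)+\lambda\,\Phi_{\vec{X}}(f)$ where the last term should be $\lambda\,\Phi_{\vec{X}}(g)$.
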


\begin{proof} The proof of the linearity is left to the reader. Now let $\vec{b}\in 
\mathbf{F}_{q}^{m}$ be an arbitrary vector. Since $m\leq q^{n}$ we can
construct a function%
\begin{equation*}
g\in F_{n}(\mathbf{F}_{q})
\end{equation*}%
with the property%
\begin{equation*}
g(\vec{x}_{i})=b_{i}\text{ }\forall \text{ }i\in \{1,...,m\}
\end{equation*}%
and that means exactly%
\begin{equation*}
\Phi _{\vec{X}}(g)=\vec{b}\text{ \ \ }\endproof
\end{equation*}
\renewcommand{\endproof}{} \end{proof}

\begin{remark}[and Corollary]
\label{FullRank}Since a basis of $F_{n}(\mathbf{F}_{q})$ is given by the
fundamental monomial functions $(g_{nq\alpha })_{\alpha \in M_{q}^{n}},$ the
matrix%
\begin{equation*}
A:=(\Phi _{\vec{X}}(g_{nq\alpha }))_{\alpha \in M_{q}^{n}}\in M(m\times
q^{n};\mathbf{F}_{q})
\end{equation*}%
representing the evaluation epimorphism $\Phi _{\vec{X}}$ of the tuple $\vec{%
X}$ with respect to the basis $(g_{nq\alpha })_{\alpha \in M_{q}^{n}}$ of $%
F_{n}(\mathbf{F}_{q})$ and the canonical basis of $\mathbf{F}_{q}^{m}$ has
always the full rank $m=\min (m,q^{n}).$ That also means, that the dimension
of the $\ker (\Phi _{\vec{X}})$ is%
\begin{equation*}
\dim (\ker (\Phi _{\vec{X}}))=\dim (F_{n}(\mathbf{F}_{q}))-m=q^{n}-m
\end{equation*}
\end{remark}

\begin{corollary}
Let $\mathbf{F}_{q}$ be a finite field and $n,m\in 
\mathbb{N}
$ natural numbers with $m\leq q^{n}$. Further let%
\begin{equation*}
\vec{X}:=(\vec{x}_{1},...,\vec{x}_{m})\in (\mathbf{F}_{q}^{n})^{m}
\end{equation*}%
be a tuple of $m$ different $n$-tuples with entries in the field $\mathbf{F}%
_{q}$ and $\vec{b}\in \mathbf{F}_{q}^{m}$ a vector. Then the interpolation
problem of finding a polynomial function $f\in PF_{n}(\mathbf{F}_{q})$ with
the property%
\begin{equation*}
f(\vec{x}_{i})=b_{i}\text{ }\forall \text{ }i\in \{1,...,m\}
\end{equation*}%
can be solved by solving the system of linear equations%
\begin{equation}
A\vec{y}=\vec{b}  \label{Int.Cond.}
\end{equation}%
where%
\begin{equation*}
A:=(\Phi _{\vec{X}}(g_{nq\alpha }))_{\alpha \in M_{q}^{n}}
\end{equation*}%
is the matrix representing the evaluation epimorphism $\Phi _{\vec{X}}$ of
the tuple $\vec{X}$ with respect to the basis $(g_{nq\alpha })_{\alpha \in
M_{q}^{n}}$ of $F_{n}(\mathbf{F}_{q})$ and the canonical basis of $\mathbf{F}%
_{q}^{m}$. The entries of a solution vector of the equations (\ref{Int.Cond.}%
) are the coefficients of the solution with respect to the basis $%
(g_{nq\alpha })_{\alpha \in M_{q}^{n}}.$
\end{corollary}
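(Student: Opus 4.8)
The plan is to unwind the definitions and reduce the stated interpolation problem to the matrix equation (\ref{Int.Cond.}) via the evaluation epimorphism, so there is essentially nothing to compute. First I would observe that, by the very definition of $\Phi_{\vec{X}}$, a polynomial function $f \in PF_n(\mathbf{F}_q)$ satisfies $f(\vec{x}_i) = b_i$ for all $i \in \{1,\ldots,m\}$ if and only if $\Phi_{\vec{X}}(f) = \vec{b}$. By Theorem \ref{ThmFuncIsPolFunc} we have $PF_n(\mathbf{F}_q) = F_n(\mathbf{F}_q)$, so the search may be carried out in the full function space $F_n(\mathbf{F}_q)$ without loss of generality: any solution $f \in F_n(\mathbf{F}_q)$ is automatically a polynomial function, which is the one point in the argument that genuinely needs a (previously established) result rather than a definition chase.

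Next I would pass to coordinates using the basis $(g_{nq\alpha})_{\alpha \in M_q^n}$ of $F_n(\mathbf{F}_q)$ provided by Theorem \ref{Fund.Mon.Fct.AreBasis}. Writing $f = \sum_{\alpha \in M_q^n} y_\alpha\, g_{nq\alpha}$ with coordinate vector $\vec{y} = (y_\alpha)_{\alpha \in M_q^n} \in \mathbf{F}_q^{q^n}$, the linearity of $\Phi_{\vec{X}}$ yields $\Phi_{\vec{X}}(f) = \sum_{\alpha \in M_q^n} y_\alpha\, \Phi_{\vec{X}}(g_{nq\alpha}) = A\vec{y}$, since by construction the columns of $A$ are precisely the vectors $\Phi_{\vec{X}}(g_{nq\alpha})$. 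Hence $\Phi_{\vec{X}}(f) = \vec{b}$ is equivalent to $A\vec{y} = \vec{b}$, and conversely the entries of any solution vector $\vec{y}$ of (\ref{Int.Cond.}) are exactly the coordinates, with respect to the basis $(g_{nq\alpha})_{\alpha \in M_q^n}$, of the function $f = \sum_{\alpha \in M_q^n} y_\alpha\, g_{nq\alpha}$, which then interpolates the data.

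Finally, to confirm that (\ref{Int.Cond.}) really does \emph{solve} the problem — i.e. that a solution exists for every prescribed $\vec{b}$ — I would invoke Remark (and Corollary) \ref{FullRank}: the matrix $A$ has full row rank $m$, so the system $A\vec{y} = \vec{b}$ is consistent for every right-hand side $\vec{b} \in \mathbf{F}_q^m$. Back-translating such a $\vec{y}$ into $f = \sum_{\alpha \in M_q^n} y_\alpha\, g_{nq\alpha}$ produces the desired interpolating polynomial function. No step here presents a real obstacle; the only things deserving care are the bookkeeping that the matrix $A$ named in the statement coincides with the column-wise matrix used to compute $\Phi_{\vec{X}}(f)$, and the appeal to $F_n(\mathbf{F}_q) = PF_n(\mathbf{F}_q)$ so that the solution produced is legitimately an element of $PF_n(\mathbf{F}_q)$.
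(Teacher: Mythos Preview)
Your proposal is correct and follows essentially the same approach as the paper: both arguments identify the interpolation conditions with the operator equation $\Phi_{\vec{X}}(g)=\vec{b}$ via $F_n(\mathbf{F}_q)=PF_n(\mathbf{F}_q)$, rewrite this in coordinates with respect to the basis $(g_{nq\alpha})_{\alpha\in M_q^n}$ to obtain $A\vec{y}=\vec{b}$, and then invoke Remark~\ref{FullRank} for solvability. Your version is slightly more explicit about the coordinate computation $\Phi_{\vec{X}}(f)=A\vec{y}$, but there is no substantive difference.
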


\begin{proof} Since $F_{n}(\mathbf{F}_{q})=PF_{n}(\mathbf{F}_{q}),$ a solution of
the interpolation problem can be found by solving the equation%
\begin{equation}
\Phi _{\vec{X}}(g)=\vec{b}  \label{OpEq}
\end{equation}%
for $g,$ where $\Phi _{\vec{X}}$\ is the surjective linear operator%
\begin{eqnarray*}
\Phi _{\vec{X}} &:&F_{n}(\mathbf{F}_{q})\rightarrow \mathbf{F}_{q}^{m} \\
f &\mapsto &\Phi _{\vec{X}}(f):=(f(\vec{x}_{1}),...,f(\vec{x}_{m}))^{t}
\end{eqnarray*}%
of the above theorem. After fixing the basis $(g_{nq\alpha })_{\alpha \in
M_{q}^{n}}$ of $F_{n}(\mathbf{F}_{q})$ and the canonical basis of $\mathbf{F}%
_{q}^{m},$ equation (\ref{OpEq}) implies the following system of linear
equations for the coefficients of the solutions with respect to the basis $%
(g_{nq\alpha })_{\alpha \in M_{q}^{n}}$%
\begin{equation*}
A\vec{y}=\vec{b}
\end{equation*}%
where%
\begin{equation*}
A:=(\Phi _{\vec{X}}(g_{nq\alpha }))_{\alpha \in M_{q}^{n}}
\end{equation*}%
is the matrix representing the map $\Phi _{\vec{X}}$ with respect to the
basis $(g_{nq\alpha })_{\alpha \in M_{q}^{n}}$ of $F_{n}(\mathbf{F}_{q})$
and the canonical basis of $\mathbf{F}_{q}^{m}$. According to Remark \ref%
{FullRank}, the matrix $A$ has full rank and therefore a solution of $A\vec{y%
}=\vec{b}$ always exists. \end{proof}

\section{Construction of special purpose symmetric bilinear forms}

Let $\mathbf{F}_{q}$ be a finite field and $n,m\in 
\mathbb{N}
$ natural numbers with $m<q^{n}$. Further let%
\begin{equation*}
\vec{X}:=(\vec{x}_{1},...,\vec{x}_{m})\in \left( \mathbf{F}_{q}^{n}\right)
^{m}
\end{equation*}%
be a tuple of $m$ different $n$-tuples with entries in the field $\mathbf{F}%
_{q}$ and $d:=\dim (F_{n}(\mathbf{F}_{q})).$ Now consider the evaluation
epimorphism $\Phi _{\vec{X}}$ of the tuple $\vec{X}.$ By Remark \ref%
{FullRank} and due to the fact $m<q^{n},$ the nullity of $\Phi _{\vec{X}}$
is given by%
\begin{equation*}
s:=\dim (\ker (\Phi _{\vec{X}}))=\dim (F_{n}(\mathbf{F}_{q}))-m=q^{n}-m>0
\end{equation*}%
Now let $(u_{1},...,u_{s})$ be a basis of $\ker (\Phi _{\vec{X}})\subseteq
F_{n}(\mathbf{F}_{q}).$ By the basis extension theorem, we can extend the
basis $(u_{1},...,u_{s})$ to a basis%
\begin{equation*}
(u_{1},...,u_{s},u_{s+1},...,u_{d})
\end{equation*}%
of the whole space $F_{n}(\mathbf{F}_{q}).$ As in example \ref%
{Const.Of.Orthonorm.Basis}, we can construct a symmetric bilinear form on $%
F_{n}(\mathbf{F}_{q})$ by setting%
\begin{equation*}
\left\langle u_{i},u_{j}\right\rangle :=\delta _{ij}\text{ }\forall \text{ }%
i,j\in \{1,...,d\}
\end{equation*}%
Here the basis $\left( u_{1},...u_{d}\right) $ is orthonormal and the
vectors $(u_{s+1},...,u_{d})$ are a basis of the orthogonal complement $\ker
(\Phi _{\vec{X}})^{\perp }$ of $\ker (\Phi _{\vec{X}}).$

In general, the way we extend the basis $(u_{1},...,u_{s})$ of $\ker (\Phi _{%
\vec{X}})$ to a basis%
\begin{equation*}
(u_{1},...,u_{s},u_{s+1},...,u_{d})
\end{equation*}%
of the whole space $F_{n}(\mathbf{F}_{q})$ determines crucially the
symmetric bilinear form we get by setting $\left\langle
u_{i},u_{j}\right\rangle :=\delta _{ij}$ $\forall $ $i,j\in \{1,...,d\}.$
Consequently, the orthogonal solution of $\Phi _{\vec{X}}(g)=\vec{b}$ may
vary according to the chosen extension $u_{s+1},...,u_{d}\in F_{n}(\mathbf{F}%
_{q}).$ One systematic way to get a basis of the whole space $F_{n}(\mathbf{F%
}_{q})$ starting with a basis $(u_{1},...,u_{s})$ of $\ker (\Phi _{\vec{X}})$
is the following: let%
\begin{equation}
\left( \vec{y}_{1},...,\vec{y}_{s}\right) ^{t}  \label{MatrixU}
\end{equation}%
be the matrix whose rows are the coordinate vectors $\vec{y}_{1},...,\vec{y}%
_{s}\in K^{d}$ of $(u_{1},...,u_{s})$ with respect to the basis $%
(g_{nq\alpha })_{\alpha \in M_{q}^{n}}$ of $F_{n}(\mathbf{F}_{q}).$ Now we
perform Gauss-Jordan elimination on the matrix (\ref{MatrixU}), obtaining
the matrix $R.$ Now consider the set $B:=\{\vec{e}_{1},...,\vec{e}_{d}\}$ of
canonical unit vectors of the space $\mathbf{F}_{q}^{d}.$ For every pivot
element $r_{ij}$ used during the Gauss-Jordan elimination performed on (\ref%
{MatrixU}), eliminate the canonical unit vector $\vec{e}_{j}$ from the set $%
B.$ This yields the set $\tilde{B}.$ The coordinate vectors for a basis for
the whole space $F_{n}(\mathbf{F}_{q})$ are now given by the the rows of $R$
and the vectors in the set $\tilde{B}.$ We call this way of construction of
the orthonormal basis for the space $F_{n}(\mathbf{F}_{q})$ the \emph{%
standard orthonormalization. }We illustrate the algorithm using an example:

\begin{example}
Suppose $q=3$, $\mathbf{F}_{3}=%
\mathbb{Z}
_{3},$ $m=4$, $d=3^{2}=9,$ $s=5$ and that after performing Gauss-Jordan
elimination on (\ref{MatrixU}) we get the following matrix%
\begin{equation}
R:=\left( 
\begin{array}{ccccccccc}
1 & 0 & z_{1,3} & 0 & 0 & z_{1,6} & 0 & z_{1,8} & z_{1,9} \\ 
0 & 1 & z_{2,3} & 0 & 0 & z_{2,6} & 0 & z_{2,8} & z_{2,9} \\ 
0 & 0 & 0 & 1 & 0 & z_{3,6} & 0 & z_{3,8} & z_{3,9} \\ 
0 & 0 & 0 & 0 & 1 & z_{4,6} & 0 & z_{4,8} & z_{4,9} \\ 
0 & 0 & 0 & 0 & 0 & 0 & 1 & z_{5,8} & z_{5,9}%
\end{array}%
\right)  \label{ReducedMat}
\end{equation}%
(The $z_{i,j}\in \mathbf{F}_{q}$ stand for unspecified field elements). Then
for the extension of the basis we choose the following canonical basis
vectors%
\begin{equation*}
\vec{e}_{3},\vec{e}_{6},\vec{e}_{8},\vec{e}_{9}\in 
\mathbb{Z}
_{3}^{9}
\end{equation*}%
Now we substitute coordinate vectors $\left( \vec{y}_{1},...,\vec{y}%
_{5}\right) $ of the basis $(u_{1},...,u_{5})$ by the rows in the reduced
matrix \ref{ReducedMat} (this step is not strictly necessary, but it will be
needed to prove the theorems below) and get the following coordinate vectors
for a basis for the whole space $F_{2}(%
\mathbb{Z}
_{3})$%
\begin{equation*}
(\widetilde{\vec{y}_{1}},...,\widetilde{\vec{y}_{s}},\vec{y}_{s+1},...,\vec{y%
}_{d}):=\left( R^{t},\vec{e}_{3},\vec{e}_{6},\vec{e}_{8},\vec{e}_{9}\right)
\end{equation*}%
In this specific example we use the standard lexicographic ordering on $%
\left( 
\mathbb{N}
_{0}\right) ^{2}$ and so\ we have%
\begin{equation*}
M_{3}^{2}=\{(2,2),(2,1),(2,0),(1,2),(1,1),(1,0),(0,2),(0,1),(0,0)\}
\end{equation*}%
and%
\begin{equation*}
(g_{23\alpha }(\vec{x}))_{\alpha \in M_{3}^{2}}=\left(
x_{2}^{2}x_{1}^{2},x_{2}^{2}x_{1},x_{2}^{2},x_{2}x_{1}^{2},x_{2}x_{1},x_{2},x_{1}^{2},x_{1},1\right)
\end{equation*}%
Thus the orthonormal basis $(\widetilde{u_{1}},...,\widetilde{u_{s}}%
,u_{s+1},...,u_{d})$ of $F_{2}(%
\mathbb{Z}
_{3})$ evaluated at the point $\vec{x}\in 
\mathbb{Z}
_{3}^{2}$ would be%
\begin{equation*}
\left( 
\begin{array}{c}
x_{2}^{2}x_{1}^{2}+z_{1,3}x_{2}^{2}+z_{1,6}x_{2}+z_{1,8}x_{1}+z_{1,9} \\ 
x_{2}x_{1}^{2}+z_{2,3}x_{2}^{2}+z_{2,6}x_{2}+z_{2,8}x_{1}+z_{2,9} \\ 
x_{2}x_{1}^{2}+z_{3,6}x_{2}+z_{3,8}x_{1}+z_{3,9} \\ 
x_{2}x_{1}+z_{4,6}x_{2}+z_{4,8}x_{1}+z_{4,9} \\ 
x_{1}^{2}+z_{5,8}x_{1}+z_{5,9} \\ 
x_{2}^{2} \\ 
x_{2} \\ 
x_{1} \\ 
1%
\end{array}%
\right) ^{t}
\end{equation*}%
and the orthogonal solution of $\Phi _{\vec{X}}(g)=\vec{b}$ is a vector in $%
Span(x_{2}^{2}$ $,$ $x_{2}$ $,$ $x_{1}$ $,$ $1).$
\end{example}

In the next section, we will establish the exact relationship between the
orthogonal solution of $\Phi _{\vec{X}}(g)=\vec{b}$ (using the symmetric
bilinear form defined above) and the normal form with respect to the
vanishing ideal $I(X).$ This relationship can be established if the order
relation $>$ used to order the $n$-tuples in the set $M_{q}^{n}$ is a \emph{%
monomial ordering}. If, more generally, total orderings on $\left( 
\mathbb{N}
_{0}\right) ^{n}$ are used to order the set $M_{q}^{n},$ the set of possible
orthogonal solutions of $\Phi _{\vec{X}}(g)=\vec{b}$ can be seen as a wider
class of normal forms (with respect to vanishing ideals) in which the
"classical" normal forms (attached to monomial orderings) appear as special
cases.

\section{Orthogonal solutions of $\Phi _{\vec{X}}(g)=\vec{b}$ and the normal
form with respect to the vanishing ideal $I(X)$}

In this section we will show the main result of this article: Given a set of
points $X\subset K^{n}$, an arbitrary polynomial $f\in K[\tau _{1},...,\tau
_{n}]$ and a monomial order $>,$ the normal form of $f$ with respect to the
vanishing ideal $I(X)\subseteq K[\tau _{1},...,\tau _{n}]$ can be calculated
as the orthogonal solution of%
\begin{equation*}
\Phi _{\vec{X}}(g)=\vec{b}
\end{equation*}%
where $\vec{b}$ is given by%
\begin{equation*}
b_{i}:=\widetilde{f}(\vec{x}_{i}),\text{ }i=1,...,m
\end{equation*}%
The yet undefined notation $\widetilde{f}$ suggests that a mapping between
the ring $K[\tau _{1},...,\tau _{n}]$ of polynomials and the vector space of
functions $F_{n}(\mathbf{F}_{q})$ is needed. That mapping will be defined
and characterized in the first lemma and theorem of this section. After
introducing some notation we arrive at an important preliminary result in
Theorem \ref{GroebnerBasis}, which states how a (particular) basis of $\ker
(\Phi _{\vec{X}})$ can be extended to a Gr\"{o}bner basis of $I(X).$ With
that result our goal can be easily reached. Please note that through this
section a more technical result stated and proved in the appendix is used.

\begin{lemma}[and Definition]
Let $K$ be a field, $n,q\in 
\mathbb{N}
$ natural numbers and $K[\tau _{1},...,\tau _{n}]$ the polynomial ring in $n$
indeterminates over $K.$ Then the set of all polynomials of the form%
\begin{equation*}
\sum_{\alpha \in M_{q}^{n}}a_{\alpha }\tau _{1}^{\alpha _{1}}...\tau
_{n}^{\alpha _{n}}\in K[\tau _{1},...,\tau _{n}]
\end{equation*}%
with coefficients $a_{\alpha }\in K$ is a vector space over $K.$ We denote
this set with $P_{q}^{n}(K)\subset K[\tau _{1},...,\tau _{n}].$
\end{lemma}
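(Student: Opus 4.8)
The plan is to verify that $P_q^n(K)$ is a linear subspace of the $K$-vector space $K[\tau_1,\ldots,\tau_n]$ by checking the three subspace properties. First I would recall that the polynomial ring $K[\tau_1,\ldots,\tau_n]$, equipped with polynomial addition and multiplication by scalars from $K$, is itself a vector space over $K$; this is standard. It therefore suffices to show that the subset $P_q^n(K)$ of polynomials of the prescribed form is nonempty and closed under the two vector space operations.

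For nonemptiness, I would take all coefficients $a_\alpha=0$, $\alpha\in M_q^n$, which exhibits the zero polynomial as an element of $P_q^n(K)$. For closure under addition, given $f=\sum_{\alpha\in M_q^n}a_\alpha\tau_1^{\alpha_1}\cdots\tau_n^{\alpha_n}$ and $g=\sum_{\alpha\in M_q^n}b_\alpha\tau_1^{\alpha_1}\cdots\tau_n^{\alpha_n}$ in $P_q^n(K)$, I would collect terms over the common finite index set $M_q^n$ to obtain $f+g=\sum_{\alpha\in M_q^n}(a_\alpha+b_\alpha)\tau_1^{\alpha_1}\cdots\tau_n^{\alpha_n}$, again of the required form with coefficients $a_\alpha+b_\alpha\in K$. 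For closure under scalar multiplication, given $\lambda\in K$ and $f$ as above, I would distribute to get $\lambda f=\sum_{\alpha\in M_q^n}(\lambda a_\alpha)\tau_1^{\alpha_1}\cdots\tau_n^{\alpha_n}\in P_q^n(K)$. This establishes the claim.

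Equivalently, and more conceptually, I would observe that $P_q^n(K)$ is precisely the $K$-linear span inside $K[\tau_1,\ldots,\tau_n]$ of the finite family of monomials $\{\tau_1^{\alpha_1}\cdots\tau_n^{\alpha_n}\mid\alpha\in M_q^n\}$, and the span of any subset of a vector space is a subspace. There is no genuine obstacle in this argument; the only point worth recording is that, since the monomials $\tau^\alpha$ for distinct $\alpha$ are linearly independent in $K[\tau_1,\ldots,\tau_n]$, the set $M_q^n$ actually indexes a basis of $P_q^n(K)$, so that $\dim_K P_q^n(K)=|M_q^n|=q^n$ by Remark~\ref{KardinalitaetVonMp} — a fact that will presumably be needed later when $P_q^n(K)$ is compared with $F_n(\mathbf{F}_q)$.
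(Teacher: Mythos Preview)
Your proof is correct; the paper itself simply writes ``The easy proof is left to the reader,'' so your subspace verification (or equivalently your span description) is exactly the kind of routine argument the author intended to omit.
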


\begin{proof} The easy proof is left to the reader. \end{proof}

\begin{theorem}
Let $\mathbf{F}_{q}$ be a finite field and $n\in 
\mathbb{N}
$ a natural number. Then the vector spaces $P_{q}^{n}(\mathbf{F}_{q})$ and $%
F_{n}(\mathbf{F}_{q})$ are isomorphic.
\end{theorem}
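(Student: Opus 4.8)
The plan is to exhibit an explicit linear isomorphism between $P_q^n(\mathbf{F}_q)$ and $F_n(\mathbf{F}_q)$. The natural candidate is the ``evaluation'' map $\Psi: P_q^n(\mathbf{F}_q) \to F_n(\mathbf{F}_q)$ that sends a polynomial $p = \sum_{\alpha \in M_q^n} a_\alpha \tau_1^{\alpha_1}\cdots\tau_n^{\alpha_n}$ to the function $\vec{x} \mapsto p(\vec{x})$ obtained by substituting field elements for the indeterminates; equivalently, $\Psi$ sends the monomial $\tau^\alpha$ to the fundamental monomial function $g_{nq\alpha}$. First I would check that $\Psi$ is $\mathbf{F}_q$-linear, which is immediate since evaluation respects addition of polynomials and scalar multiplication. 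Next, $\Psi$ maps the spanning set $\{\tau^\alpha : \alpha \in M_q^n\}$ of $P_q^n(\mathbf{F}_q)$ onto the family $(g_{nq\alpha})_{\alpha \in M_q^n}$, which by Theorem \ref{Fund.Mon.Fct.AreBasis} is a basis of $F_n(\mathbf{F}_q)$; in particular $\Psi$ is surjective.

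For injectivity (equivalently, since we will have matched dimensions, it suffices to argue either one), I would observe that the monomials $(\tau^\alpha)_{\alpha \in M_q^n}$ are linearly independent in $K[\tau_1,\dots,\tau_n]$ by definition, so they form a basis of $P_q^n(\mathbf{F}_q)$, whence $\dim P_q^n(\mathbf{F}_q) = |M_q^n| = q^n$ by Remark \ref{KardinalitaetVonMp}. Since $\dim F_n(\mathbf{F}_q) = q^n$ as well (same theorem and remark), the surjective linear map $\Psi$ between spaces of equal finite dimension is automatically an isomorphism. Alternatively, one can argue injectivity directly: if $\Psi(p) = 0$ then $\sum_\alpha a_\alpha g_{nq\alpha} = 0$ as a function, and since the $g_{nq\alpha}$ are a basis all $a_\alpha = 0$, so $p = 0$.

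The only subtle point — the ``main obstacle,'' though it is genuinely mild here — is to be careful that $\Psi$ does land in $F_n(\mathbf{F}_q)$ as described and that the monomials indexed by $M_q^n$ really are a basis of the \emph{source} space $P_q^n(\mathbf{F}_q)$, not merely a spanning set that might collapse; this is fine because $P_q^n(\mathbf{F}_q)$ was defined as a subspace of the genuine polynomial ring $K[\tau_1,\dots,\tau_n]$, where distinct monomials are linearly independent by fiat. I would therefore structure the proof as: (i) define $\Psi$ via $\tau^\alpha \mapsto g_{nq\alpha}$ and extend linearly; (ii) note $(\tau^\alpha)_{\alpha\in M_q^n}$ is a basis of $P_q^n(\mathbf{F}_q)$, so $\dim P_q^n(\mathbf{F}_q) = q^n$; (iii) invoke Theorem \ref{Fund.Mon.Fct.AreBasis} and Remark \ref{KardinalitaetVonMp} to get $\dim F_n(\mathbf{F}_q) = q^n$ with basis $(g_{nq\alpha})_{\alpha\in M_q^n}$; (iv) conclude that $\Psi$ carries a basis to a basis and is hence an isomorphism. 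This map $\Psi$ is presumably the ``yet undefined $\widetilde{f}$'' alluded to above (or rather its inverse direction), so I would also remark that $\Psi$ is the canonical identification under which we may freely pass between polynomials in $P_q^n(\mathbf{F}_q)$ and functions in $F_n(\mathbf{F}_q)$.
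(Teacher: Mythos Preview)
Your proposal is correct and follows exactly the paper's approach: the paper defines the same evaluation map (called $\varphi$ there), sending $\sum_{\alpha\in M_q^n} a_\alpha \tau^\alpha$ to the function $\vec{x}\mapsto \sum_{\alpha\in M_q^n} a_\alpha \vec{x}^{\,\alpha}$, and then simply asserts that ``the claim follows easily.'' Your write-up just spells out the routine verification (basis $\mapsto$ basis, or equivalently the dimension count via Theorem~\ref{Fund.Mon.Fct.AreBasis} and Remark~\ref{KardinalitaetVonMp}) that the paper leaves implicit, and your identification of $\Psi$ with the paper's $\varphi$ (and hence with the $\widetilde{f}$ notation) is spot on.
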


\begin{proof} After defining the linear mapping%
\begin{eqnarray*}
\varphi &:&P_{q}^{n}(\mathbf{F}_{q})\rightarrow F_{n}(\mathbf{F}_{q}) \\
g &=&\sum_{\alpha \in M_{q}^{n}}a_{\alpha }\tau _{1}^{\alpha _{1}}...\tau
_{n}^{\alpha _{n}}\mapsto \varphi (g)(\vec{x}):=\sum_{\alpha \in
M_{q}^{n}}a_{\alpha }\overrightarrow{x}^{\alpha }
\end{eqnarray*}%
the claim follows easily. \end{proof}

\begin{remark}[and Definition]
The mapping $\varphi $ is defined on the set $P_{q}^{n}(K)\subset K[\tau
_{1},...,\tau _{n}],$ but of course it can naturally be extended to $K[\tau
_{1},...,\tau _{n}]$ as%
\begin{eqnarray*}
\varphi &:&K[\tau _{1},...,\tau _{n}]\rightarrow F_{n}(\mathbf{F}_{q}) \\
g &=&\sum_{\alpha \in \Gamma }a_{\alpha }\tau _{1}^{\alpha _{1}}...\tau
_{n}^{\alpha _{n}}\mapsto \varphi (g)(\vec{x}):=\sum_{\alpha \in \Gamma
}a_{\alpha }\overrightarrow{x}^{\alpha }
\end{eqnarray*}%
where $\Gamma $ is a finite set of multi indexes. We denote the image under $%
\varphi :K[\tau _{1},...,\tau _{n}]\rightarrow F_{n}(\mathbf{F}_{q})$ of a
polynomial $g\in K[\tau _{1},...,\tau _{n}]$ with%
\begin{equation*}
\widetilde{g}:=\varphi (g)\in F_{n}(\mathbf{F}_{q})
\end{equation*}
\end{remark}

\begin{definition}
Let $d\in 
\mathbb{N}
$ be a natural number, $V$ a $d$-dimensional vector space over a field $K$
and $F$ a basis of $V.$ Furthermore, let $U\subset V$ be an arbitrary \emph{%
proper} subspace of $V.$ Now let $s:=\dim (U)\in 
\mathbb{N}
.$ A basis $(u_{1},...,u_{s})$ of $U$ is called a \emph{cleaned kernel basis
with respect to the basis }$F$ if the matrix $\left( \vec{y}_{1},...,\vec{y}%
_{s}\right) ^{t}$ whose rows are the coordinate vectors $\vec{y}_{1},...,%
\vec{y}_{s}\in K^{d}$ of $(u_{1},...,u_{s})$ with respect to the basis $F$
is in reduced row echelon form.
\end{definition}

For a tuple $\vec{x}=(x_{1},...,x_{n})$ we write $x:=\{x_{1},...,x_{n}\}$
for the set containing all the entries in the tuple $\vec{x}.$

\begin{theorem}
\label{GroebnerBasis}Let $\mathbf{F}_{q}$ be a finite field, $n,m\in 
\mathbb{N}
$ natural numbers with $m<q^{n}$ and $>$ a fixed monomial order. Further let%
\begin{equation*}
\vec{X}:=(\vec{x}_{1},...,\vec{x}_{m})\in \left( \mathbf{F}_{q}^{n}\right)
^{m}
\end{equation*}%
be a tuple of $m$ different $n$-tuples with entries in the field $\mathbf{F}%
_{q}$ and $s:=\dim (\ker (\Phi _{\vec{X}})).$ In addition, let $%
(u_{1},...,u_{s})$ be a cleaned kernel basis of $\ker (\Phi _{\vec{X}%
})\subseteq F_{n}(\mathbf{F}_{q})$ with respect to the basis $(g_{nq\alpha
})_{\alpha \in M_{q}^{n}}$. Then the family of polynomials%
\begin{equation*}
\left( \tau _{1}^{q}-\tau _{1},\tau _{2}^{q}-\tau _{2},...,\tau
_{n}^{q}-\tau _{n},\varphi ^{-1}(u_{1}),...,\varphi ^{-1}(u_{s})\right)
\end{equation*}%
is a Gr\"{o}bner basis of the vanishing ideal $I(X)\subseteq \mathbf{F}%
_{q}[\tau _{1},...,\tau _{n}]$ with respect to the monomial order $>.$
\end{theorem}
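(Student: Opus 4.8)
The plan is to verify the two defining properties of a Gröbner basis for $I(X)$: first that the listed polynomials all lie in $I(X)$, and second that their leading terms generate the leading term ideal $\mathrm{LT}(I(X))$. The first property is immediate: each $\tau_j^q-\tau_j$ vanishes on all of $\mathbf{F}_q^n$ by Fermat's little theorem (applied coordinatewise), hence on $X$; and for each $i$, since $u_i\in\ker(\Phi_{\vec X})$ we have $u_i(\vec x_k)=0$ for $k=1,\dots,m$, which by the definition of $\varphi$ (and $\widetilde{\cdot}$) says exactly that $\varphi^{-1}(u_i)$ vanishes on $X$, so $\varphi^{-1}(u_i)\in I(X)$. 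The generated ideal is therefore contained in $I(X)$; one then notes it is actually equal to $I(X)$, because the quotient of $\mathbf{F}_q[\tau_1,\dots,\tau_n]$ by the ideal generated by the $\tau_j^q-\tau_j$ together with $\varphi^{-1}(u_1),\dots,\varphi^{-1}(u_s)$ already has dimension $\le q^n-s=m=|X|$ as an $\mathbf{F}_q$-vector space, while $\dim_{\mathbf{F}_q}\mathbf{F}_q[\tau]/I(X)=|X|=m$ — so the two ideals coincide.

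The heart of the argument is the leading-term condition. The key structural input is that $(u_1,\dots,u_s)$ is a \emph{cleaned kernel basis} with respect to the ordered monomial basis $(g_{nq\alpha})_{\alpha\in M_q^n}$, i.e. its coordinate matrix is in reduced row echelon form relative to that ordering. I would first record that the leading monomials (with respect to $>$) of $\tau_1^q-\tau_1,\dots,\tau_n^q-\tau_n$ are $\tau_1^q,\dots,\tau_n^q$, and their presence forces: any monomial divisible by some $\tau_j^q$ is already in the leading term ideal of the family, so we only need to worry about monomials $\tau^\alpha$ with $\alpha\in M_q^n$. Among those, the leading monomial of $\varphi^{-1}(u_i)$ is precisely the $g_{nq\alpha}$-basis vector indexed by the pivot column of row $i$ of the reduced echelon matrix; because the monomial ordering $>$ is the very ordering used to order $M_q^n$, "leftmost nonzero entry = pivot" translates directly into "largest monomial = leading monomial." So the set of pivot indices gives $s$ distinct leading monomials among the $u_i$'s.

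It remains to show these $s$ monomials, together with $\tau_1^q,\dots,\tau_n^q$, generate $\mathrm{LT}(I(X))$ — equivalently, by the standard staircase/dimension count, that the number of monomials \emph{not} lying in the monomial ideal they generate is exactly $m$. Since every monomial outside $(\tau_1^q,\dots,\tau_n^q)$ corresponds to an element of $M_q^n$ ($q^n$ of them), and the pivot columns remove exactly $s$ of these while the non-pivot columns (the $d-s=q^n-s=m$ of them used in the standard orthonormalization to extend the basis) remain, the standard monomials number exactly $m=|X|$. On the other hand $\dim_{\mathbf{F}_q}\mathbf{F}_q[\tau]/I(X)=|X|=m$, so the staircase determined by our leading monomials has exactly the right size; since it is a priori contained in $\mathrm{LT}(I(X))$ and cannot be strictly smaller (that would make the quotient by $\mathrm{LT}(I(X))$, hence by $I(X)$, have dimension $<m$), the inclusion $\langle \mathrm{LT}(\text{family})\rangle\subseteq\mathrm{LT}(I(X))$ is an equality. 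This is the Gröbner basis property. The main obstacle I anticipate is the bookkeeping in this last paragraph — carefully matching "reduced row echelon form with respect to the $>$-ordered basis $M_q^n$" to "the monomials below the staircase are exactly the non-pivot columns" — and making sure the $\tau_j^q-\tau_j$ relations are correctly folded in so that the effective ambient set of monomials really is $M_q^n$ and nothing larger; this is exactly where the technical appendix result alluded to in the text is presumably invoked.
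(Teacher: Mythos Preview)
Your argument is correct and takes a genuinely different route from the paper. The paper proves the theorem via the ``every element of the ideal reduces to zero'' characterization: it takes an arbitrary $g\in I(X)$, first reduces it modulo the universal Gr\"obner basis $(\tau_j^q-\tau_j)$ to land in $P_q^n(\mathbf{F}_q)$, then observes that the resulting $r$ lies in $\ker(\Phi_{\vec X})$ and hence is an $\mathbf{F}_q$-linear combination $r=\sum_i\lambda_i\varphi^{-1}(u_i)$; the cleaned-basis hypothesis guarantees that the leading terms in this sum cannot cancel, so the division algorithm applied to $r$ with divisors $\varphi^{-1}(u_1),\dots,\varphi^{-1}(u_s)$ terminates with remainder zero. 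You instead go straight for the defining property $\langle\mathrm{LT}(\text{family})\rangle=\mathrm{LT}(I(X))$ and establish it by a Macaulay-type dimension count, comparing the number of standard monomials under the candidate leading-term ideal to $\dim_{\mathbf{F}_q}\mathbf{F}_q[\tau]/I(X)=|X|=m$.

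One point of phrasing to tighten: when you write that ``the pivot columns remove exactly $s$ of these while the non-pivot columns remain, the standard monomials number exactly $m$'', what is immediate is only the inequality $\#\{\text{standard monomials}\}\le m$, since standard monomials for $J:=\langle\tau_1^q,\dots,\tau_n^q,\mathrm{LT}(\varphi^{-1}(u_i))\rangle$ are a subset of the non-pivot monomials in $M_q^n$. The reverse inequality comes exactly from the second half of your paragraph: $J\subseteq\mathrm{LT}(I(X))$ forces $\#\{\text{standard for }J\}\ge\#\{\text{standard for }\mathrm{LT}(I(X))\}=m$. So the two bounds together give the equality and hence $J=\mathrm{LT}(I(X))$; you have all the pieces, but the ``exactly $m$'' is the conclusion rather than an intermediate observation. (Alternatively one can show directly that the pivot set is upward-closed under divisibility within $M_q^n$, using that $\ker(\Phi_{\vec X})$ is an ideal in the function ring; this is the ``bookkeeping'' you flagged, and it is indeed true, but your sandwich argument makes it unnecessary.)

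What each approach buys: the paper's division argument is constructive and makes transparent \emph{why} the reduced-echelon hypothesis matters---it literally prevents leading-term cancellation during the reduction. Your dimension argument is shorter and more conceptual, and it isolates the role of the hypothesis as providing $s$ distinct leading monomials inside $M_q^n$, after which the count $q^n-s=m=|X|$ does the rest. Both rely on the appendix fact that $(\tau_j^q-\tau_j)$ is a universal Gr\"obner basis of $I(\mathbf{F}_q^n)$, which you correctly anticipated.
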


\begin{proof} \bigskip The idea of the proof is to show that%
\begin{equation*}
U:=\left( \tau _{1}^{q}-\tau _{1},\tau _{2}^{q}-\tau _{2},...,\tau
_{n}^{q}-\tau _{n},\varphi ^{-1}(u_{1}),...,\varphi ^{-1}(u_{s})\right) 
\end{equation*}%
generates the ideal $I(X)$ and that for any polynomial $g\in I(X)$ the
remainder on division of $g$ by $U$ is zero. According to a well known fact
about Gr\"{o}bner bases (see proposition 5.38 of \cite{MR1213453}) this is
equivalent to $U$ being a Gr\"{o}bner basis for $I(X).$ For this proof,
remember that the fundamental monomial functions $(g_{nq\alpha })_{\alpha
\in M_{q}^{n}}$ are ordered decreasingly with respect to the order $>.$%
\newline
Now let $g\in I(X)\subseteq \mathbf{F}_{q}[\tau _{1},...,\tau _{n}]$ be an
arbitrary polynomial in the vanishing ideal of $X.$ Since%
\begin{equation*}
\left( \tau _{1}^{q}-\tau _{1},\tau _{2}^{q}-\tau _{2},...,\tau
_{n}^{q}-\tau _{n}\right) 
\end{equation*}%
is a universal Gr\"{o}bner basis for $I(\mathbf{F}_{q}^{n})$ (see Theorem %
\ref{Th.UniversalBasis} in the appendix), there is a unique\linebreak $r\in 
\mathbf{F}_{q}[\tau _{1},...,\tau _{n}]$ with the properties

\begin{enumerate}
\item No term of $r$ is divisible by any of $LT(\tau _{1}^{q}-\tau
_{1})=\tau _{1}^{q},LT(\tau _{2}^{q}-\tau _{2})=\tau _{2}^{q},...,LT(\tau
_{n}^{q}-\tau _{n})=\tau _{n}^{q}.$ That means in particular $r\in P_{q}^{n}(%
\mathbf{F}_{q}).$

\item There is a $q\in I(\mathbf{F}_{q}^{n})$ such that $g=q+r$
\end{enumerate}

This means that when we start to divide $g$ by the (ordered) family $U$ we
get the intermediate result%
\begin{equation*}
g=q+r
\end{equation*}%
where the remainder $r\in P_{q}^{n}(\mathbf{F}_{q})$ and $q\in \left\langle
\tau _{1}^{q}-\tau _{1},\tau _{2}^{q}-\tau _{2},...,\tau _{n}^{q}-\tau
_{n}\right\rangle =I(\mathbf{F}_{q}^{n}).$ If $r=0,$ then we are done and
the remainder $\bar{g}^{U}$ on division of $g$ by $U$ is zero. If $r\neq 0,$
then we know from%
\begin{equation*}
r=g-q
\end{equation*}%
that $r\in I(X)$ ($q\in I(\mathbf{F}_{q}^{n})\subseteq I(X)$) and this is
equivalent to%
\begin{equation*}
\widetilde{r}(\vec{x})=\varphi (r)(\vec{x})=0\text{ }\forall \text{ }\vec{x}%
\in \mathbf{F}_{q}^{n}\Leftrightarrow \widetilde{r}\in \ker (\Phi _{\vec{X}})
\end{equation*}%
Since $(u_{1},...,u_{s})$ is a basis for $\ker (\Phi _{\vec{X}}),$ there are
unique $\lambda _{i}\in \mathbf{F}_{q},$ $i=1,...,s$ with%
\begin{equation*}
\widetilde{r}=\sum_{i=1}^{s}\lambda _{i}u_{i}
\end{equation*}%
Applying the vector space isomorphism $\varphi ^{-1}:F_{n}(\mathbf{F}%
_{q})\rightarrow P_{q}^{n}(\mathbf{F}_{q})$ to this equation yields%
\begin{equation*}
r=\sum_{i=1}^{s}\lambda _{i}\varphi ^{-1}(u_{i})
\end{equation*}%
From the requirement on $(u_{1},...,u_{s})$ to be a cleaned kernel basis of $%
\ker (\Phi _{\vec{X}})$ now follows for each $j\in \{1,...,s\},$ that the
leading term%
\begin{equation*}
LT(\varphi ^{-1}(u_{j}))
\end{equation*}%
doesn't appear in the polynomials $\varphi ^{-1}(u_{i}),$ $i\in
\{1,...,s\}\backslash \{j\}.$ Consequently, in the expression%
\begin{equation*}
\sum_{i=1}^{s}\lambda _{i}\varphi ^{-1}(u_{i})
\end{equation*}%
no cancellation of the leading terms $LT(\varphi ^{-1}(u_{i})),$ $i=1,...,s$
can occur. Therefore, the division of $r=\sum_{i=1}^{s}\lambda _{i}\varphi
^{-1}(u_{i})$ by $\left( \varphi ^{-1}(u_{1}),...,\varphi
^{-1}(u_{s})\right) $ must yield%
\begin{equation*}
r=\sum_{i=1}^{s}\lambda _{i}\varphi ^{-1}(u_{i})+0
\end{equation*}%
and the remainder $\bar{g}^{U}$ on division of $g$ by $U$ is zero. As a
consequence,%
\begin{equation*}
g\in \left\langle \tau _{1}^{q}-\tau _{1},\tau _{2}^{q}-\tau _{2},...,\tau
_{n}^{q}-\tau _{n},\varphi ^{-1}(u_{1}),...,\varphi ^{-1}(u_{s})\right\rangle
\end{equation*}%
and since $g\in I(X)$ was arbitrary%
\begin{equation*}
I(X)\subseteq \left\langle \tau _{1}^{q}-\tau _{1},\tau _{2}^{q}-\tau
_{2},...,\tau _{n}^{q}-\tau _{n},\varphi ^{-1}(u_{1}),...,\varphi
^{-1}(u_{s})\right\rangle
\end{equation*}%
The inclusion%
\begin{equation*}
\left\langle \tau _{1}^{q}-\tau _{1},\tau _{2}^{q}-\tau _{2},...,\tau
_{n}^{q}-\tau _{n},\varphi ^{-1}(u_{1}),...,\varphi
^{-1}(u_{s})\right\rangle \subseteq I(X)
\end{equation*}%
is given by the fact $u_{1},...,u_{s}\in \ker (\Phi _{\vec{X}})$ and Theorem %
\ref{Th.UniversalBasis}. Summarizing we can say%
\begin{equation*}
\left\langle \tau _{1}^{q}-\tau _{1},\tau _{2}^{q}-\tau _{2},...,\tau
_{n}^{q}-\tau _{n},\varphi ^{-1}(u_{1}),...,\varphi
^{-1}(u_{s})\right\rangle =I(X)
\end{equation*}%
and for every $g\in I(X)$ the remainder $\bar{g}^{U}$ on division of $g$ by $%
U$ is zero. Now proposition 5.38 of \cite{MR1213453} (see also the remarks
after corollary 2, chapter 2, \S\ 6 of \cite{MR1417938}) proves the claim. 
\end{proof}

\begin{theorem}
Let $\mathbf{F}_{q}$ be a finite field, $n,m\in 
\mathbb{N}
$ natural numbers with $m<q^{n}$ and $>$ a fixed monomial order. Further let%
\begin{equation*}
\vec{X}:=(\vec{x}_{1},...,\vec{x}_{m})\in (\mathbf{F}_{q}^{n})^{m}
\end{equation*}%
be a tuple of $m$ different $n$-tuples with entries in the field $\mathbf{F}%
_{q}$, $\vec{b}\in \mathbf{F}_{q}^{m}$ a vector, $d:=\dim (F_{n}(\mathbf{F}%
_{q}))$ and $s:=\dim (\ker (\Phi _{\vec{X}})).$ In addition, let $%
(u_{1},...,u_{s})$ be a \emph{cleaned} kernel basis of $\ker (\Phi _{\vec{X}%
})\subseteq F_{n}(\mathbf{F}_{q})$ with respect to the basis $(g_{nq\alpha
})_{\alpha \in M_{q}^{n}}$, $(u_{1},...,u_{s},u_{s+1},...,u_{d})$ an
orthonormal basis of $F_{n}(\mathbf{F}_{q})$ constructed \emph{using the
standard orthonormalization} and $f\in \mathbf{F}_{q}[\tau _{1},...,\tau
_{n}]$ a polynomial satisfying the interpolation conditions%
\begin{equation*}
\widetilde{f}(\vec{x}_{j})=b_{j}\text{ }\forall \text{ }j\in \{1,...,m\}
\end{equation*}%
Furthermore, let $U\subseteq I(X)$ be an arbitrary Gr\"{o}bner basis of the
vanishing ideal $I(X)$ with respect to the monomial order $>$ and $v^{\ast }$
the orthogonal solution of $\Phi _{\vec{X}}(g)=\vec{b}$. Then%
\begin{equation*}
\varphi ^{-1}(v^{\ast })=\overline{f}^{U}
\end{equation*}
\end{theorem}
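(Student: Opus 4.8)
The plan is to combine Theorem~\ref{GroebnerBasis} with the explicit construction of the orthogonal solution given in the proof of the uniqueness theorem for orthogonal solutions. First I would observe that $\varphi^{-1}(v^{\ast})$ is well defined: since the standard orthonormalization substitutes the rows of the reduced matrix $R$ for the coordinate vectors of $(u_1,\dots,u_s)$ and then adjoins canonical unit vectors, the resulting orthonormal basis $(u_1,\dots,u_s,u_{s+1},\dots,u_d)$ still has $(u_1,\dots,u_s)$ as a cleaned kernel basis of $\ker(\Phi_{\vec X})$, and in particular every $u_i$ lies in $P^n_q(\mathbf{F}_q)$ under $\varphi^{-1}$, so $v^{\ast}\in F_n(\mathbf{F}_q)$ and $\varphi^{-1}(v^{\ast})\in P^n_q(\mathbf{F}_q)$.

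Next I would use the explicit formula from the existence half of the orthogonal-solution theorem: starting from the particular solution $\widehat\xi:=\widetilde f$ of $\Phi_{\vec X}(g)=\vec b$ (which solves the equation precisely because of the interpolation conditions $\widetilde f(\vec x_j)=b_j$), we have
\begin{equation*}
v^{\ast}=\widetilde f-\tsum_{i=1}^{s}\left\langle u_i,\widetilde f\right\rangle u_i .
\end{equation*}
Applying $\varphi^{-1}$ and writing $h_i:=\varphi^{-1}(u_i)$ gives
\begin{equation*}
\varphi^{-1}(v^{\ast})=f-\tsum_{i=1}^{s}\left\langle u_i,\widetilde f\right\rangle h_i ,
\end{equation*}
so $\varphi^{-1}(v^{\ast})$ differs from $f$ by an element of $\langle h_1,\dots,h_s\rangle\subseteq I(X)$; hence $\varphi^{-1}(v^{\ast})$ and $f$ represent the same residue class modulo $I(X)$. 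Since the normal form $\overline f^{U}$ with respect to a Gröbner basis $U$ of $I(X)$ is the unique representative of that class none of whose terms is divisible by a leading term of an element of $\langle U\rangle=I(X)$ (and this representative is independent of which Gröbner basis for the given monomial order is chosen), it suffices to verify that no term of $\varphi^{-1}(v^{\ast})$ is divisible by any leading term of a polynomial in $I(X)$.

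The heart of the argument — and what I expect to be the main obstacle — is precisely this last verification, i.e.\ identifying which monomials can appear in $\varphi^{-1}(v^{\ast})$. Here is where the standard orthonormalization does its work. By Theorem~\ref{GroebnerBasis} the family $(\tau_1^q-\tau_1,\dots,\tau_n^q-\tau_n,h_1,\dots,h_s)$ is a Gröbner basis of $I(X)$; its leading terms are $\tau_1^q,\dots,\tau_n^q$ together with $\operatorname{LT}(h_1),\dots,\operatorname{LT}(h_s)$. A monomial $\vec\tau^{\alpha}$ is a normal-form monomial (a \emph{standard monomial}) iff $\alpha\in M^n_q$ and $\vec\tau^{\alpha}$ is divisible by no $\operatorname{LT}(h_i)$. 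Because $(u_1,\dots,u_s)$ is a cleaned kernel basis, the coordinate matrix $R$ is in reduced row echelon form, so the pivot columns correspond exactly to the multi-indices $\alpha\in M^n_q$ with $g_{nq\alpha}=\widetilde{\vec\tau^{\alpha}}$ equal to (the image of) some $\operatorname{LT}(h_i)$, while the non-pivot columns are exactly the indices of the canonical unit vectors $\vec e_j$ retained in $\tilde B$. Thus the complementary basis vectors $u_{s+1},\dots,u_d$ are, up to $\varphi$, precisely the standard monomials of $I(X)$ for the order $>$, and $\ker(\Phi_{\vec X})^{\perp}=\operatorname{Span}(u_{s+1},\dots,u_d)$ corresponds under $\varphi^{-1}$ to the span of the standard monomials. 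Now I would invoke Remark~\ref{Orth.Sol.LiesOnOrth.Comp.}: $v^{\ast}\in\ker(\Phi_{\vec X})^{\perp}$, so $\varphi^{-1}(v^{\ast})$ is a linear combination of standard monomials only, i.e.\ no term of it is divisible by any leading term of an element of $I(X)$. Combined with $\varphi^{-1}(v^{\ast})\equiv f\pmod{I(X)}$, the defining property of the normal form forces $\varphi^{-1}(v^{\ast})=\overline f^{U}$, completing the proof. The one technical point to be careful about — and the reason the "more technical result in the appendix" is cited — is that the reduced-row-echelon structure of $R$ really does match pivot columns with leading terms in the correct (decreasing) order on $M^n_q$; this uses that the fundamental monomial functions are indexed in $>$-decreasing order, so that row-reduction from the left produces pivots in the leading monomials and zeros elsewhere exactly as the Gröbner-basis leading-term combinatorics requires.
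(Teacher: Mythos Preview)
Your strategy is sound and in fact more transparent than the paper's, but two steps need repair.

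First, the displayed identity $\varphi^{-1}(v^{\ast})=f-\sum_{i}\langle u_i,\widetilde f\rangle\, h_i$ is false in general: $\varphi^{-1}$ is the inverse of the isomorphism $\varphi\!\mid_{P^n_q(\mathbf F_q)}$, so $\varphi^{-1}(\widetilde f)$ is the reduction of $f$ modulo $\tau_1^q-\tau_1,\dots,\tau_n^q-\tau_n$, not $f$ itself, whenever $f\notin P^n_q(\mathbf F_q)$. The conclusion you actually need, $\varphi^{-1}(v^{\ast})\equiv f\pmod{I(X)}$, is still correct, but derive it (as the paper does) directly from $\Phi_{\vec X}(v^{\ast})=\vec b$ together with the interpolation hypothesis on $f$: then the function associated with $f-\varphi^{-1}(v^{\ast})$ vanishes on $X$, so $f-\varphi^{-1}(v^{\ast})\in I(X)$.

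Second, the sentence ``pivot columns correspond to the $\operatorname{LT}(h_i)$, \emph{thus} $u_{s+1},\dots,u_d$ are precisely the standard monomials'' slides from ``not equal to any $\operatorname{LT}(h_i)$'' to ``not divisible by any $\operatorname{LT}(h_i)$'', and this is not automatic: a non-pivot monomial in $M^n_q$ could a priori be a proper multiple of some $\operatorname{LT}(h_i)$. Close the gap with a dimension count: every standard monomial is in particular unequal to every $\operatorname{LT}(h_i)$ and has all exponents below $q$, hence indexes a non-pivot column; but both sets have exactly $m$ elements (the standard monomials because $\dim\mathbf F_q[\tau]/I(X)=|X|=m$, the non-pivot columns because there are $d-s=q^n-(q^n-m)=m$ of them), so they coincide. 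With this in place your identification $\varphi^{-1}\bigl(\ker(\Phi_{\vec X})^{\perp}\bigr)=\operatorname{Span}(\text{standard monomials})$ is justified and the argument goes through.

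The paper takes a different and more roundabout route. It never identifies $\ker(\Phi_{\vec X})^{\perp}$ with the span of standard monomials; instead it divides $\varphi^{-1}(v^{\ast})$ by the Gr\"obner basis of Theorem~\ref{GroebnerBasis}, obtains a remainder $r$, verifies that $\widetilde r$ is itself an orthogonal solution of $\Phi_{\vec X}(g)=\vec b$, and then invokes uniqueness of the orthogonal solution to force $\varphi^{-1}(v^{\ast})=r$, which a posteriori means no term of $\varphi^{-1}(v^{\ast})$ is divisible by a leading term of the basis. Your approach is shorter and makes the underlying picture (orthogonal complement $=$ space of normal forms) explicit; the paper's approach avoids the dimension count but pays for it with a case analysis and a division-plus-uniqueness detour.
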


\begin{proof} If $\varphi ^{-1}(v^{\ast })=0$ then $v^{\ast }=0$ and%
\begin{equation*}
\vec{b}=\Phi _{\vec{X}}(v^{\ast })=\Phi _{\vec{X}}(0)=\vec{0}
\end{equation*}%
In this case we also have%
\begin{equation*}
\overline{f}^{U}=0
\end{equation*}%
and therefore%
\begin{equation*}
\varphi ^{-1}(v^{\ast })=\overline{f}^{U}
\end{equation*}%
Assume $\varphi ^{-1}(v^{\ast })\neq 0.$ Since the remainder on division by
a Gr\"{o}bner basis is independent of which Gr\"{o}bner basis we use (for a
fixed monomial order), the idea of the proof is to show that $\varphi
^{-1}(v^{\ast })$ is the unique remainder on division by the Gr\"{o}bner
basis%
\begin{equation*}
\left( \tau _{1}^{q}-\tau _{1},\tau _{2}^{q}-\tau _{2},...,\tau
_{n}^{q}-\tau _{n},\varphi ^{-1}(u_{1}),...,\varphi ^{-1}(u_{s})\right)
\end{equation*}%
(see Theorem \ref{GroebnerBasis}). Now, since $\varphi ^{-1}(v^{\ast })\in
P_{q}^{n}(\mathbf{F}_{q}),$ no term of $\varphi ^{-1}(v^{\ast })$ is
divisible by any of the%
\begin{equation*}
LT(\tau _{1}^{q}-\tau _{1})=\tau _{1}^{q},LT(\tau _{2}^{q}-\tau _{2})=\tau
_{2}^{q},...,LT(\tau _{n}^{q}-\tau _{n})=\tau _{n}^{q}
\end{equation*}%
If terms of $\varphi ^{-1}(v^{\ast })$ would be divisible by%
\begin{equation*}
LT(\varphi ^{-1}(u_{1})),...,LT(\varphi ^{-1}(u_{s}))
\end{equation*}%
then after division by the family%
\begin{equation*}
\left( \tau _{1}^{q}-\tau _{1},\tau _{2}^{q}-\tau _{2},...,\tau
_{n}^{q}-\tau _{n},\varphi ^{-1}(u_{1}),...,\varphi ^{-1}(u_{s})\right)
\end{equation*}%
we would have%
\begin{equation}
\varphi ^{-1}(v^{\ast })=\sum_{i=1}^{s}h_{i}\varphi ^{-1}(u_{i})+r
\label{Relation}
\end{equation}%
where $h_{i},r\in \mathbf{F}_{q}[\tau _{1},...,\tau _{n}],$ $i=1,...,s$ and
either $r=0$ or no term of $r$ is divisible by the%
\begin{equation*}
LT(\tau _{1}^{q}-\tau _{1}),...,LT(\tau _{n}^{q}-\tau _{n}),LT(\varphi
^{-1}(u_{1})),...,LT(\varphi ^{-1}(u_{s}))
\end{equation*}%
If $r=0$, then%
\begin{equation*}
\varphi ^{-1}(v^{\ast })=\sum_{i=1}^{s}h_{i}\varphi ^{-1}(u_{i})
\end{equation*}%
and the polynomial $\varphi ^{-1}(v^{\ast })$ vanishes on the set $X,$ that
is%
\begin{equation*}
\varphi (\varphi ^{-1}(v^{\ast }))(\vec{x})=v^{\ast }(\vec{x})=0\text{ }%
\forall \text{ }\vec{x}\in X
\end{equation*}%
Consequently%
\begin{equation*}
\vec{b}=\Phi _{\vec{X}}(v^{\ast })=\vec{0}
\end{equation*}%
and due to the uniqueness of the orthogonal solution%
\begin{equation*}
v^{\ast }=0
\end{equation*}%
But this is a contradiction to our assumption $\varphi ^{-1}(v^{\ast })\neq
0.$\newline
Now if $r\neq 0,$ since no term of $r$ is divisible by $LT(\tau
_{1}^{q}-\tau _{1}),...,LT(\tau _{n}^{q}-\tau _{n}),$ then in particular $%
r\in P_{q}^{n}(\mathbf{F}_{q}).$ Due to the fact, that $%
(u_{1},...,u_{s},u_{s+1},...,u_{d})$ is a basis for $F_{n}(\mathbf{F}_{q}),$
we can write%
\begin{equation*}
\widetilde{r}=\varphi (r)=\sum_{j=1}^{d}\lambda _{j}u_{j}
\end{equation*}%
with unique $\lambda _{j}\in \mathbf{F}_{q},$ $j=1,...,d.$ Applying the
vector space isomorphism $\varphi ^{-1}:F_{n}(\mathbf{F}_{q})\rightarrow
P_{q}^{n}(\mathbf{F}_{q})$ to this equation yields%
\begin{equation*}
r=\sum_{j=1}^{d}\lambda _{j}\varphi ^{-1}(u_{j})
\end{equation*}%
From the requirement on $(u_{1},...,u_{s})$ to be a cleaned kernel basis of $%
\ker (\Phi _{\vec{X}})$ with respect to the basis $(g_{nq\alpha })_{\alpha
\in M_{q}^{n}}$ and since the basis extension $%
(u_{1},...,u_{s},u_{s+1},...,u_{d})$ has been constructed using the standard
orthonormalization, in the expression%
\begin{equation*}
\sum_{j=1}^{d}\lambda _{j}\varphi ^{-1}(u_{j})
\end{equation*}%
no cancellation of the leading terms $LT(\varphi ^{-1}(u_{k})),$ $k=1,...,s$
can occur. But $r$ is not divisible by $LT(\varphi
^{-1}(u_{1})),...,LT(\varphi ^{-1}(u_{s}))$ and that forces%
\begin{equation*}
\lambda _{k}=0,\text{ }\forall \text{ }k\in \{1,...,s\}
\end{equation*}%
In other words%
\begin{equation*}
r=\sum_{j=s+1}^{d}\lambda _{j}\varphi ^{-1}(u_{j})\Leftrightarrow \widetilde{%
r}=\varphi (r)=\sum_{j=s+1}^{d}\lambda _{j}u_{j}
\end{equation*}%
which is equivalent to%
\begin{equation}
\widetilde{r}\in \ker (\Phi _{\vec{X}})^{\perp }  \label{Ortho}
\end{equation}%
From the equation (\ref{Relation}) we know that%
\begin{equation*}
r=\varphi ^{-1}(v^{\ast })-\sum_{i=1}^{s}h_{i}\varphi ^{-1}(u_{i})
\end{equation*}%
and that means%
\begin{equation*}
\widetilde{r}(\vec{x})=v^{\ast }(\vec{x})\text{ }\forall \text{ }\vec{x}\in X
\end{equation*}%
In other words%
\begin{equation*}
\Phi _{\vec{X}}(\widetilde{r})=\vec{b}
\end{equation*}%
This together with (\ref{Ortho}) says that $\widetilde{r}$ is an orthogonal
solution of $\Phi _{\vec{X}}(g)=\vec{b}.$ From the uniqueness now follows%
\begin{equation*}
v^{\ast }=\widetilde{r}\Leftrightarrow \varphi ^{-1}(v^{\ast })=r
\end{equation*}%
Consequently, no term of the polynomial $\varphi ^{-1}(v^{\ast })$ is
divisible by any of the leading terms of the elements of the Gr\"{o}bner
basis (see Theorem \ref{GroebnerBasis})%
\begin{equation*}
G:=\left( \tau _{1}^{q}-\tau _{1},\tau _{2}^{q}-\tau _{2},...,\tau
_{n}^{q}-\tau _{n},\varphi ^{-1}(u_{1}),...,\varphi ^{-1}(u_{s})\right)
\end{equation*}%
for the vanishing ideal $I(X).$ Now we define the polynomial%
\begin{equation*}
h:=f-\varphi ^{-1}(v^{\ast })
\end{equation*}%
Since $v^{\ast }$ is a solution of $\Phi _{\vec{X}}(g)=\vec{b}$ and $f$
satisfies the interpolation conditions%
\begin{equation*}
\widetilde{f}(\vec{x}_{j})=b_{j}\text{ }\forall \text{ }j\in \{1,...,m\}
\end{equation*}%
we have%
\begin{equation*}
\widetilde{h}(\vec{x})=\widetilde{f}(\vec{x})-v^{\ast }(\vec{x})=0\text{ }%
\forall \text{ }\vec{x}\in X\Leftrightarrow h\in I(X)
\end{equation*}%
So we have a polynomial $h\in I(X)$ such that%
\begin{equation*}
f=h+\varphi ^{-1}(v^{\ast })
\end{equation*}%
By proposition 1, chapter 2, \S 6 in \cite{MR1417938}, $\varphi
^{-1}(v^{\ast })$ is the unique remainder on division by the Gr\"{o}bner
basis $G.$ It is a well known fact, that the remainder on division by a Gr%
\"{o}bner basis is independent of which Gr\"{o}bner basis we use, as long as
we use one fixed particular monomial order. Therefore%
\begin{equation*}
\overline{f}^{U}=\overline{f}^{G}=\varphi ^{-1}(v^{\ast })\text{ \ \ }%
\endproof
\end{equation*}%
\renewcommand{\endproof}{} \end{proof}

\begin{remark}[and main theorem]
Let $\mathbf{F}_{q}$ be a finite field, $n,m\in 
\mathbb{N}
$ natural numbers with $m<q^{n}$ and $>$ a fixed monomial order. Further let%
\begin{equation*}
\vec{X}:=(\vec{x}_{1},...,\vec{x}_{m})\in (\mathbf{F}_{q}^{n})^{m}
\end{equation*}%
be a tuple of $m$ different $n$-tuples with entries in the field $\mathbf{F}%
_{q}$, $U\subseteq I(X)$ an arbitrary Gr\"{o}bner basis of the vanishing
ideal $I(X)$ and $f\in \mathbf{F}_{q}[\tau _{1},...,\tau _{n}]$ an \emph{%
arbitrary} polynomial. Then%
\begin{equation*}
\overline{f}^{U}=\varphi ^{-1}(v^{\ast })
\end{equation*}%
where $v^{\ast }$ is the orthogonal solution of $\Phi _{\vec{X}}(g)=\vec{b}$
and $\vec{b}$ is given by%
\begin{equation*}
b_{i}:=\widetilde{f}(\vec{x}_{i}),\text{ }i=1,...,m
\end{equation*}
\end{remark}

\begin{remark}
\label{MatrixCalculation}Let%
\begin{equation*}
A:=(\Phi _{\vec{X}}(g_{nq\alpha }))_{\alpha \in M_{q}^{n}}\in M(m\times
q^{n};\mathbf{F}_{q})
\end{equation*}%
be the matrix representing the evaluation epimorphism $\Phi _{\vec{X}}$ of
the tuple $\vec{X}$ with respect to the basis $(g_{nq\alpha })_{\alpha \in
M_{q}^{n}}$ of $F_{n}(\mathbf{F}_{q})$ and the canonical basis of $\mathbf{F}%
_{q}^{m}$ and $S$ the matrix%
\begin{equation*}
S_{ij}:=\left\langle g_{nq\alpha _{i}},g_{nq\alpha _{j}}\right\rangle ,\text{
}i,j\in \{1,...,q^{n}\}
\end{equation*}%
representing the symmetric bilinear form\ with respect to the basis $%
(g_{nq\alpha })_{\alpha \in M_{q}^{n}}$. Further let\linebreak $\vec{y}%
_{1},...,\vec{y}_{s}\in \mathbf{F}_{q}^{d}$ be the coordinate vectors of $%
(u_{1},...,u_{s})$ with respect to the basis $(g_{nq\alpha })_{\alpha \in
M_{q}^{n}}$. Then the above result states that the normal form $\overline{f}%
^{U}$ of $f$ with respect to the Gr\"{o}bner basis $U\subseteq I(X)$ can be
calculated by solving the following system of inhomogeneous linear equations%
\begin{eqnarray*}
A\vec{z} &=&\vec{b} \\
\vec{y}_{i}^{t}S\vec{z} &=&0,\text{ }i=1,...,s
\end{eqnarray*}
\end{remark}

\section{Acknowledgements}

We would like to thank Dr. Gretchen Matthews, Dr. Michael Shapiro and Dr.
Michael Stillman for very helpful comments and contributions for the content
of this paper.

\section{Appendix}

\begin{lemma}
Let $K$ be a field, $n\in 
\mathbb{N}
$ a natural number, $K[\tau _{1},...,\tau _{n}]$ the polynomial ring in $n$
indeterminates over $K$ and $>$ an arbitrary monomial order. Then for each
natural number $m\in 
\mathbb{N}
$ and each $i\in \{1,...,n\}$ it holds%
\begin{equation}
\tau _{i}^{m}>\tau _{i}^{m-1}>...>\tau _{i}>\tau _{i}^{0}  \label{Inequa.}
\end{equation}
\end{lemma}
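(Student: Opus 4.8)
The plan is to reduce the whole chain to the single inequality $\tau_i > \tau_i^0 = 1$ and then to propagate it using the compatibility of a monomial order with multiplication. First I would recall the defining properties of a monomial order $>$ on $K[\tau_1,\dots,\tau_n]$: it is a total order on the set of monomials $\tau^\alpha := \tau_1^{\alpha_1}\cdots\tau_n^{\alpha_n}$, $\alpha\in(\mathbb{N}_0)^n$; it is compatible with multiplication, i.e. $\tau^\alpha > \tau^\beta$ implies $\tau^{\alpha+\gamma} > \tau^{\beta+\gamma}$ for every $\gamma\in(\mathbb{N}_0)^n$; and it is a well-ordering (equivalently, $\tau^\alpha \ge 1$ for every $\alpha$).

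Second, I would prove the base case $\tau_i > \tau_i^0 = 1$. Since $>$ is a total order and $\tau_i \neq 1$ as monomials, exactly one of $\tau_i > 1$ and $1 > \tau_i$ holds. Assuming $1 > \tau_i$ and multiplying repeatedly by $\tau_i$, compatibility with multiplication produces the infinite strictly descending chain $1 > \tau_i > \tau_i^2 > \tau_i^3 > \cdots$, contradicting well-orderedness; hence $\tau_i > \tau_i^0$.

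Third, I would finish by an immediate induction on the exponent: multiplying the inequality $\tau_i > 1$ by $\tau_i^{k}$ and using compatibility with multiplication gives $\tau_i^{k+1} > \tau_i^{k}$ for every $k\in\mathbb{N}_0$, and transitivity of $>$ then chains these together to
\[
\tau_i^m > \tau_i^{m-1} > \cdots > \tau_i > \tau_i^0
\]
for any $m\in\mathbb{N}$, which is precisely (\ref{Inequa.}).

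The argument is entirely routine; the only place demanding the slightest attention is the base case, where the well-ordering axiom (or, under an alternative but equivalent axiomatization, the axiom that $1$ is the $>$-minimal monomial) is indispensable — dropping it makes the statement false. Everything else is just transitivity and the multiplicative compatibility already built into the notion of a monomial order.
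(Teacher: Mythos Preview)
Your proof is correct and follows essentially the same approach as the paper, which simply states that the claim follows from the well-ordering, the translation invariance, and the transitivity of $>$. You have merely spelled out in detail how these three properties combine.
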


\begin{proof}The claim follows from the well-ordering, the translation invariance
and transitivity of $>.$\end{proof}

\begin{theorem}
Let $\mathbf{F}_{q}$ be a finite field and $n\in 
\mathbb{N}
$ a natural number. Then the family of polynomials%
\begin{equation*}
\left( \tau _{1}^{q}-\tau _{1},\tau _{2}^{q}-\tau _{2},...,\tau
_{n}^{q}-\tau _{n}\right)
\end{equation*}%
is a basis for the vanishing ideal%
\begin{equation*}
I(\mathbf{F}_{q}^{n})\subseteq \mathbf{F}_{q}[\tau _{1},...\tau _{n}]
\end{equation*}
\end{theorem}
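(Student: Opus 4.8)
The plan is to establish the two inclusions of ideals separately. The inclusion $\left\langle \tau_1^q-\tau_1,\ldots,\tau_n^q-\tau_n\right\rangle\subseteq I(\mathbf{F}_q^n)$ is immediate: every $a\in\mathbf{F}_q$ satisfies $a^q=a$, so each generator $\tau_i^q-\tau_i$ vanishes identically on $\mathbf{F}_q^n$, and hence so does every polynomial in the ideal it generates.

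For the reverse inclusion I would fix an arbitrary monomial order $>$ and invoke the division algorithm. By the preceding lemma in the appendix, $LT(\tau_i^q-\tau_i)=\tau_i^q$ for each $i$. Given $g\in I(\mathbf{F}_q^n)$, dividing $g$ by the ordered family $(\tau_1^q-\tau_1,\ldots,\tau_n^q-\tau_n)$ yields $g=\sum_{i=1}^n h_i(\tau_i^q-\tau_i)+r$ with $h_i\in\mathbf{F}_q[\tau_1,\ldots,\tau_n]$, where no monomial occurring in $r$ is divisible by any $\tau_i^q$; equivalently, every exponent appearing in $r$ is strictly smaller than $q$ in each variable, i.e. $r\in P_q^n(\mathbf{F}_q)$. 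Since $g$ and all the $\tau_i^q-\tau_i$ vanish on $\mathbf{F}_q^n$, so does $r$, which means $\widetilde{r}=\varphi(r)=0$ in $F_n(\mathbf{F}_q)$.

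The crucial step is then to deduce $r=0$ from $\widetilde{r}=0$. This is exactly the injectivity of $\varphi:P_q^n(\mathbf{F}_q)\rightarrow F_n(\mathbf{F}_q)$, which is part of the isomorphism theorem proved earlier; alternatively one argues directly by induction on $n$ that a polynomial with all exponents below $q$ that vanishes on $\mathbf{F}_q^n$ is zero (for $n=1$ a nonzero polynomial of degree $<q$ has fewer than $q$ roots, and for the inductive step one writes $r=\sum_{j=0}^{q-1}r_j(\tau_1,\ldots,\tau_{n-1})\tau_n^j$ and applies the one-variable fact at each point of $\mathbf{F}_q^{n-1}$). Once $r=0$ we obtain $g=\sum_{i=1}^n h_i(\tau_i^q-\tau_i)\in\left\langle\tau_1^q-\tau_1,\ldots,\tau_n^q-\tau_n\right\rangle$, completing the argument. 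I expect no genuine obstacle; the only delicate point is ensuring the remainder really lands in $P_q^n(\mathbf{F}_q)$, which holds precisely because the leading terms of the divisors are the pure powers $\tau_i^q$.

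As an alternative that bypasses the division algorithm, one can argue by dimension count: the residue ring $\mathbf{F}_q[\tau_1,\ldots,\tau_n]/\left\langle\tau_1^q-\tau_1,\ldots,\tau_n^q-\tau_n\right\rangle$ is spanned as an $\mathbf{F}_q$-vector space by the images of the $q^n$ monomials whose exponents are all below $q$, hence has dimension at most $q^n$; on the other hand $\mathbf{F}_q[\tau_1,\ldots,\tau_n]/I(\mathbf{F}_q^n)\cong F_n(\mathbf{F}_q)$ has dimension exactly $q^n$ by Theorem \ref{Fund.Mon.Fct.AreBasis}. The already established inclusion of ideals induces a surjective $\mathbf{F}_q$-linear map from the first quotient onto the second, which forces the first quotient to have dimension at least $q^n$; equality of dimensions then makes this surjection an isomorphism, so the two ideals coincide.
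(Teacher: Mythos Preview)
Your argument is correct. Note that the paper itself does not give a proof of this theorem at all: it simply cites \cite{Germundsson} for the result. So any comparison of approaches is moot, and what you have written is strictly more than what the paper provides.

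A few remarks on your write-up. The division-algorithm argument is the cleanest, and the key point---that the remainder lies in $P_q^n(\mathbf{F}_q)$ because the leading terms are the pure powers $\tau_i^q$---is exactly right. Your appeal to the injectivity of $\varphi$ on $P_q^n(\mathbf{F}_q)$ is logically sound: that injectivity is equivalent to the linear independence of the fundamental monomial functions (Theorem~\ref{Fund.Mon.Fct.AreBasis}), whose proof does not rely on the present theorem, so there is no circularity even though the isomorphism statement is placed later in the text than the appendix. You were wise to include the self-contained induction alternative, which removes any doubt on this point.

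Your dimension-count alternative is also fine; the isomorphism $\mathbf{F}_q[\tau_1,\ldots,\tau_n]/I(\mathbf{F}_q^n)\cong F_n(\mathbf{F}_q)$ follows from Theorem~\ref{ThmFuncIsPolFunc} (surjectivity of polynomial evaluation) together with the definition of the vanishing ideal as the kernel of that evaluation, so this route is equally non-circular.
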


\begin{proof}The proof of this well-known result can be found after Lemma 3.1 of 
\cite{Germundsson}.\end{proof}

\begin{theorem}
\label{Th.UniversalBasis}Let $\mathbf{F}_{q}$ be a finite field and $n\in 
\mathbb{N}
$ a natural number. Then the family of polynomials%
\begin{equation*}
\left( \tau _{1}^{q}-\tau _{1},\tau _{2}^{q}-\tau _{2},...,\tau
_{n}^{q}-\tau _{n}\right)
\end{equation*}%
is a universal Gr\"{o}bner basis for the vanishing ideal%
\begin{equation*}
I(\mathbf{F}_{q}^{n})\subseteq \mathbf{F}_{q}[\tau _{1},...\tau _{n}]
\end{equation*}
\end{theorem}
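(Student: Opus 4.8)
The plan is to fix an arbitrary monomial order $>$ and prove that
\[
G:=\left( \tau _{1}^{q}-\tau _{1},\tau _{2}^{q}-\tau _{2},...,\tau _{n}^{q}-\tau _{n}\right)
\]
is a Gr\"obner basis of $I(\mathbf{F}_{q}^{n})$ with respect to $>$; since $>$ is arbitrary, this is exactly the assertion that $G$ is a universal Gr\"obner basis. By the preceding theorem, $G$ already generates $I(\mathbf{F}_{q}^{n})$, so by the standard characterization of Gr\"obner bases (Chapter~2, \S\,6 of \cite{MR1417938}) it remains only to check that the leading terms of the members of $G$ generate the leading term ideal $\left\langle LT(I(\mathbf{F}_{q}^{n}))\right\rangle $. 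By the lemma above, $\tau _{i}^{q}>\tau _{i}$ for every monomial order, so $LT(\tau _{i}^{q}-\tau _{i})=\tau _{i}^{q}$, and the whole problem reduces to showing $\left\langle \tau _{1}^{q},...,\tau _{n}^{q}\right\rangle =\left\langle LT(I(\mathbf{F}_{q}^{n}))\right\rangle $.

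The inclusion $\left\langle \tau _{1}^{q},...,\tau _{n}^{q}\right\rangle \subseteq \left\langle LT(I(\mathbf{F}_{q}^{n}))\right\rangle $ is immediate since each $\tau _{i}^{q}-\tau _{i}$ lies in $I(\mathbf{F}_{q}^{n})$ and has leading term $\tau _{i}^{q}$. For the reverse inclusion I would argue by a dimension count that does not itself appeal to any Gr\"obner basis. The ring homomorphism sending a polynomial to the function it induces on $\mathbf{F}_{q}^{n}$ is surjective onto $PF_{n}(\mathbf{F}_{q})$ with kernel exactly the vanishing ideal $I(\mathbf{F}_{q}^{n})$, so $\mathbf{F}_{q}[\tau _{1},...,\tau _{n}]/I(\mathbf{F}_{q}^{n})\cong PF_{n}(\mathbf{F}_{q})=F_{n}(\mathbf{F}_{q})$ by Theorem \ref{ThmFuncIsPolFunc}, and $\dim _{\mathbf{F}_{q}}F_{n}(\mathbf{F}_{q})=q^{n}$ by Theorem \ref{Fund.Mon.Fct.AreBasis} together with Remark \ref{KardinalitaetVonMp}. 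On the other hand, for \emph{any} monomial order the residue classes of the monomials that do not lie in $\left\langle LT(I(\mathbf{F}_{q}^{n}))\right\rangle $ form an $\mathbf{F}_{q}$-basis of this quotient ring (a standard fact about Gr\"obner bases; see \cite{MR1417938}), so there are exactly $q^{n}$ such monomials.

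Then I would conclude by comparing cardinalities. The monomials not lying in $\left\langle \tau _{1}^{q},...,\tau _{n}^{q}\right\rangle $ are precisely the $\tau ^{\alpha }$ with $\alpha \in M_{q}^{n}$, and there are exactly $q^{n}$ of these by Remark \ref{KardinalitaetVonMp}. By the inclusion of leading term ideals established above, the monomials outside $\left\langle LT(I(\mathbf{F}_{q}^{n}))\right\rangle $ form a subset of those $q^{n}$ monomials; since that subset also has cardinality $q^{n}$, the two sets of monomials must coincide, and two monomial ideals whose complements (as sets of monomials) agree are equal, so $\left\langle LT(I(\mathbf{F}_{q}^{n}))\right\rangle =\left\langle \tau _{1}^{q},...,\tau _{n}^{q}\right\rangle $. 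This is exactly the defining condition for $G$ to be a Gr\"obner basis with respect to $>$, and as $>$ was arbitrary, $G$ is a universal Gr\"obner basis. The one point that needs care -- and what I view as the real content of the argument -- is keeping the count non-circular: the value $\dim _{\mathbf{F}_{q}}\mathbf{F}_{q}[\tau _{1},...,\tau _{n}]/I(\mathbf{F}_{q}^{n})=q^{n}$ must be derived from Theorems \ref{ThmFuncIsPolFunc} and \ref{Fund.Mon.Fct.AreBasis} without assuming $G$ (or anything else) is a Gr\"obner basis, and the standard-monomial-basis fact is invoked for the a priori unknown ideal $\left\langle LT(I(\mathbf{F}_{q}^{n}))\right\rangle $ rather than for $\left\langle \tau _{1}^{q},...,\tau _{n}^{q}\right\rangle $; once this is respected, everything else is routine.
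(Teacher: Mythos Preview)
Your proof is correct but takes a genuinely different route from the paper's. The paper verifies Buchberger's $S$-pair criterion directly: it computes $S(\tau_j^q-\tau_j,\tau_i^q-\tau_i)=\tau_j^q\tau_i-\tau_i^q\tau_j$ and then explicitly reduces this to zero in two division steps, first by $\tau_j^q-\tau_j$ and then by $\tau_i^q-\tau_i$. (An even quicker variant along those lines: the leading monomials $\tau_1^q,\ldots,\tau_n^q$ are pairwise coprime, so every $S$-polynomial reduces to zero automatically by the standard shortcut in Buchberger's algorithm.) You instead compare initial ideals via a dimension count, using that the standard monomials modulo $\langle LT(I(\mathbf{F}_q^n))\rangle$ number exactly $\dim_{\mathbf{F}_q}\mathbf{F}_q[\tau_1,\ldots,\tau_n]/I(\mathbf{F}_q^n)=q^n$, which coincides with the number of monomials outside $\langle\tau_1^q,\ldots,\tau_n^q\rangle$; equality of finite sets under the inclusion you established then forces $\langle LT(I(\mathbf{F}_q^n))\rangle=\langle\tau_1^q,\ldots,\tau_n^q\rangle$. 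The paper's argument is purely computational and independent of the earlier material on $F_n(\mathbf{F}_q)$, while yours is more conceptual, links the statement to the combinatorics of the quotient, and transfers verbatim to any ideal whose codimension one already knows. You were right to flag the non-circularity issue: the standard-monomial basis fact (Macaulay's theorem) follows from the mere \emph{existence} of some Gr\"obner basis for $I(\mathbf{F}_q^n)$ with respect to $>$, so invoking it does not presuppose that $G$ itself is one.
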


\begin{proof} From the inequalities \ref{Inequa.} it follows in particular for all
possible monomial orders%
\begin{equation*}
LM(\tau _{i}^{q}-\tau _{i})=\tau _{i}^{q}\text{ }\forall \text{ }i\in
\{1,...,n\}
\end{equation*}%
As a consequence, for the least common multiple ($LCM$) of $LM(\tau
_{j}^{q}-\tau _{j})$ and $LM(\tau _{i}^{q}-\tau _{i}),$ $i\neq j$ holds%
\begin{equation*}
LCM(LM(\tau _{j}^{q}-\tau _{j}),LM(\tau _{i}^{q}-\tau _{i}))=LCM(\tau
_{j}^{q},\tau _{i}^{q})=\tau _{j}^{q}\tau _{i}^{q}\text{ }\forall \text{ }%
i,j\in \{1,...,n\}\text{ with }i\neq j
\end{equation*}%
and for the $S$-polynomial of $\tau _{j}^{q}-\tau _{j}$ and $\tau
_{i}^{q}-\tau _{i},$ $i\neq j$ we have%
\begin{equation*}
S(\tau _{j}^{q}-\tau _{j},\tau _{i}^{q}-\tau _{i})=\tau _{i}^{q}(\tau
_{j}^{q}-\tau _{j})-\tau _{j}^{q}(\tau _{i}^{q}-\tau _{i})=\tau _{j}^{q}\tau
_{i}-\tau _{i}^{q}\tau _{j}\text{ }\forall \text{ }i,j\in \{1,...,n\}\text{
with }i\neq j
\end{equation*}%
Now let's divide $S(\tau _{j}^{q}-\tau _{j},\tau _{i}^{q}-\tau _{i})=\tau
_{j}^{q}\tau _{i}-\tau _{i}^{q}\tau _{j}$ by $\left( \tau _{1}^{q}-\tau
_{1},\tau _{2}^{q}-\tau _{2},...,\tau _{n}^{q}-\tau _{n}\right) .$ Without
loss of generality let%
\begin{equation*}
\tau _{j}^{q}\tau _{i}>\tau _{i}^{q}\tau _{j}
\end{equation*}%
(which is equivalent to $LT(\tau _{j}^{q}\tau _{i}-\tau _{i}^{q}\tau
_{j})=\tau _{j}^{q}\tau _{i}$). Then, after the first division step, we get
the remainder%
\begin{equation*}
-\tau _{i}^{q}\tau _{j}+\tau _{i}\tau _{j}
\end{equation*}%
Now we know from the inequalities (\ref{Inequa.}) after translation by $\tau
_{j}$%
\begin{equation*}
\tau _{i}^{q}\tau _{j}>\tau _{i}\tau _{j}\Rightarrow LT(-\tau _{i}^{q}\tau
_{j}+\tau _{i}\tau _{j})=-\tau _{i}^{q}\tau _{j}
\end{equation*}%
so we can continue the division process and we get the remainder%
\begin{equation*}
-\tau _{i}^{q}\tau _{j}+\tau _{i}\tau _{j}-(-\tau _{j})(\tau _{i}^{q}-\tau
_{i})=0
\end{equation*}%
By the theorem above%
\begin{equation*}
I(\mathbf{F}_{q}^{n})=\left\langle \tau _{1}^{q}-\tau _{1},\tau
_{2}^{q}-\tau _{2},...,\tau _{n}^{q}-\tau _{n}\right\rangle
\end{equation*}%
And so, according to Buchberger's $S$-pair criterion (see Theorem 6 of
chapter 2, \S 6 in \cite{MR1417938}),%
\begin{equation*}
\left( \tau _{1}^{q}-\tau _{1},\tau _{2}^{q}-\tau _{2},...,\tau
_{n}^{q}-\tau _{n}\right)
\end{equation*}%
is a universal Gr\"{o}bner Basis for $I(\mathbf{F}_{q}^{n}).$ \end{proof}


\bibliographystyle{authordate1}
\bibliography{MathRef}

\end{document}